\title{Entwining structures in monoidal categories}
\author{B. Mesablishvili}
\keywords{Entwining module, (braided) monoidal category, Hopf algebra}
\thanks{Supported by the research project "Algebraic and Topological Structures
in Homotopical and Categorical  Algebra, K-theory and Cyclic
Homology``, with financial support of the grant  GNSF/ST06/3-004.}
\newcommand{\A}{{\mathcal {A}}}
\newcommand{\B}{{\mathcal {B}}}
\newcommand{\T}{{\textbf{T}=(T, \eta, \mu)}}
\newcommand{\G}{{\textbf{G}=(G, \varepsilon, \delta)}}
\newcommand{\GG}{{\bar{{\textbf{G}}}=(\bar{G}, \bar{\varepsilon}, \bar{\delta})}}
\newcommand{\TT}{{\bar{\textbf{T}}=(\bar{T}, \bar{\eta}, \bar{\mu})}}
\newcommand{\V}{{\mathcal {V}}}
\newcommand{\la}{{\mathcal {(\mathbb{C},\mathbb{A},\lambda)}}}
\newcommand{\Al}{{ {\mathbb{A}=(A,e_A,m_A)}}}
\newcommand{\C}{{\mathcal {\mathbb{C}=(C,\varepsilon_C, \delta_C)}}}
\begin{document}
\maketitle
\begin{abstract}
Interpreting entwining structures as special instances of J. Beck's distributive law, the concept of entwining module can be generalized for the setting of arbitrary monoidal category. In this paper, we use the distributive law formalism to extend in this setting basic properties of entwining modules. 
\end{abstract}

\section{Introduction}

The important notion of entwining structures has been introduced by T. Brzezi\'nski and S. Majid in \cite{BM}. An entwining structure (over a commutative ring $K$) consists of a $K$-algebra $A$, a $K$-coalgebra $C$ and a certain $K$-homomorphism $\lambda: C \otimes_K A \to A \otimes_K C$ satisfying some axioms. Associated to $\lambda$ there is the category $\mathcal M _A ^C (\lambda)$ of entwining modules whose objects are at the same time  $A$-modules and $C$-comodules, with compatibility relation given by $\lambda$. 

The algebra $A$ can be identified with the monad $T= -\otimes_K A : \text{Mod}_K \to \text{Mod}_K$ whose Eilenberg-Moore category of algebras, $(\text{Mod}_K)^T$, is (isomorphic to) the category of right $A$-modules. Similarly, $C$ can be identified with the comonad $G= -\otimes_K C : \text{Mod}_K \to \text{Mod}_K$, and the corresponding Eilenberg-Moore category of coalgebras with the category of $C$-comodules. It turns out that to give an entwining structure $C \otimes_K A \to A \otimes_K C$ is to give a mixed distributive law $TG \to GT$ from the monad $T$  to the comonad G in the sense of J. Beck \cite{Bk}, which are in bijective corresondence with liftings (or extensions) $\overline{G}$ of the comonad $G$ to the category $(\text{Mod}_K)^T$; or, equivalently, liftings $\overline{T}$ of the  monad $T$ to the category $(\text{Mod}_K)_G$. Moreover, the categories $\mathcal M _A ^C (\lambda)$ , $((\text{Mod}_K)^T)_{\overline{}G}$ and $((\text{Mod}_K)_G)^T$ are isomorphic. Thus, the (mixed) distributive law formalism can be  used to study entwining structures and the corresponding category of modules. In this article -based on this formalism- we extend in the context of monoidal categories some of basic results on entwining structures that appear in the literature (see, for example, \cite{BW1}, \cite{C}, \cite{W1}).

The paper is organized as follows. After recalling the notion of Beck's mixed distributive law and the basic facts about it, we define in Section 3 an entwining structure in any monoidal category. In Section 4, we prove some categorical results that are needed in the next section, but may also be of independent interest. Finally, in the last section we present our main results.

We refer to M. Barr and C. Wells \cite{BW}, S. MacLane \cite{M} and F. Borceux \cite{B} for terminology and general results on (co)monads, and to T. Brzezi\'nski and R. Wisbauer \cite{BW1} for coring and comodule
theory.

\section{Mixed distributive laws}

Let $\T$ be a monad and $\G$ a comonad on a category $\A$. A
\emph{mixed distributive law} from $\T$ to $\G$ is a natural
transformation $$\lambda : \textbf{T} \textbf{G} \to
\textbf{G}\textbf{T}$$ for which the diagrams

$$
\xymatrix{ & G \ar[dl]_{\eta G} \ar[rd]^{G \eta}& & & T G
\ar[dl]_{\lambda} \ar[rd]^{T \varepsilon}&\\
T G \ar[rr]_{\lambda}&& G T \,,& G T \ar[rr]_{\varepsilon T} &&
T,}
$$

$$
\xymatrix{ T G \ar[d]_{\lambda} \ar[r]^{T\delta} & T G^2
\ar[r]^{\lambda G} & G T G \ar[d]^{G \lambda} &\text{and}& T^2 G
\ar[d]_{\mu G} \ar[r]^{T \lambda} & T G T \ar[r]^{\lambda T} & G T
T \ar[d]^{G
\mu}\\
G T \ar[rr]_{\delta T}&& G G T && TG \ar[rr]_{\lambda} && G T}
$$ commute.

Given a monad $\T$ on $\A$, write $\A ^{\textbf{T}}$ for the
Eilenberg-Moore category of $\textbf{T}$-algebras, and write
$F^{\textbf{T}} \dashv U^{\textbf{T}}: \A^{\textbf{T}} \to \A$ for
the corresponding forgetful-free adjunction. Dually, if $\G$ is
comonad on $\A$, then write $\A_{\textbf{G}}$ for the category of
$\textbf{G}$-coalgebras, and write $F_{\textbf{G}} \dashv
U_{\textbf{G}}: \A_{\textbf{G}} \to \A$ for the corresponding
forgetful-cofree adjunction.

\begin{theorem} \emph{(} see \cite{W} \emph{)} Let $\T$ be a monad and
$\G$ a comonad on a
category $\A$. Then the following structures are in bijective
correspondences:
\begin{itemize}
\item mixed distributive laws $\lambda : \textbf{T} \textbf{G} \to
\textbf{G}\textbf{T}$;

\item comonads $\GG$ on $\A^{T}$ that extend $\textbf{G}$ in the
sense that $U ^T \bar{G}= G U^T$, $U^T
\bar{\varepsilon}=\varepsilon U^T$ and $U^T \bar{\delta}=\delta
U^T$;

\item monads $\TT$ on $\A_{G}$ that extend $\textbf{T}$ in the
sense that $U_G \bar{T}=T U_G$, $U_G \bar{\eta}=\eta U_G$ and $U_G
\bar{\mu}=\mu U_G$.
\end{itemize} These correspondences are constructed as follows:
\begin{itemize}
\item Given a mixed distributive law $$\lambda : \textbf{T}
\textbf{G} \to \textbf{G}\textbf{T},$$ then $\bar{G}(a,
\xi_a)=(G(a), G(\xi_a) \cdot \lambda_a)$, $\bar{\varepsilon}_{(a,
\xi_a)}=\varepsilon_a$, $\bar{\delta}_{(a, \xi_a)}=\delta_a$, for
any $(a, \xi_a) \in \A ^\textbf{T}$; and $\bar{T}(a, \nu_a)=(T(a),
\lambda_a \cdot T(\nu_a))$, $\bar{\eta}_{(a, \nu_a)}=\eta_a$,
$\bar{\mu}_{(a, \nu_a)}=\mu_a $ for any $(a, \nu_a) \in
\A_{\textbf{G}}$.

\item If $\GG$ is a comonad on $\A ^{\textbf{T}}$ extending the
comonad $\G$, then the corresponding distributive law $$\lambda :
\textbf{T} \textbf{G} \to \textbf{G}\textbf{T}$$ is given by
$$\xymatrix{TG \ar[r]^-{TG \eta}& TGT=U^{T} F^{T}
G U^{T} F^{T}=U^T F^T U^T \bar{G}F^T \ar[rr]^-{U^T \varepsilon ^T
\bar{G}F^T} && U^T \bar{G} F^T=GU^TF^T=GT,}$$ where
$\varepsilon^{\textbf{T}}: F^T U^T \to 1$ is the counit of the
adjunction $F^{T} \dashv U^{T}$.

\item If $\TT$ is a monad on $\A_{\textbf{G}}$ extending $\T$,
then the corresponding mixed distributive law is given by
$$\xymatrix{TG =TU_GF_G =U_G \bar{T} F_G \ar[rr]^-{U_G \eta_G
\bar{T} F_G} &&U_GF_GU_G \bar{T} F_G=U_G F_G T U_G F_G =GTG
\ar[r]^-{GT \varepsilon} & GT},$$ where $\eta_G : 1 \to F_GU_G$ is
the unit of the adjunction $U_G \dashv F_G.$

\end{itemize}

\end{theorem}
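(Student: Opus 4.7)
The plan is to establish the three-way bijection by constructing the indicated passages explicitly and verifying in turn that (a) each construction produces a valid object, (b) the round-trip compositions are identities. I would handle the $(\lambda, \bar{G})$ correspondence in detail and deduce the $(\lambda, \bar{T})$ correspondence by formal duality, since replacing $\A^{\textbf{T}}$ by $\A_{\textbf{G}}$ and $\varepsilon^{\textbf{T}}$ by $\eta_{\textbf{G}}$ turns one argument into the other.

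For the passage from $\lambda$ to $\bar{G}$, I would first check that $(G(a), G(\xi_a)\cdot \lambda_a)$ is a $\textbf{T}$-algebra: the unit law follows from the $\eta$-triangle of $\lambda$ together with the unit law for $(a,\xi_a)$, while associativity follows from the hexagon involving $\mu$ plus associativity of $\xi_a$. Naturality of $\lambda$ ensures that if $f$ is a $\textbf{T}$-morphism then so is $G(f)$, giving a functor $\bar{G}:\A^{\textbf{T}}\to\A^{\textbf{T}}$. That $\bar{\varepsilon}_{(a,\xi_a)}=\varepsilon_a$ is a $\textbf{T}$-morphism uses the $\varepsilon$-triangle of $\lambda$; that $\bar{\delta}_{(a,\xi_a)}=\delta_a$ is a $\textbf{T}$-morphism uses the $\delta$-hexagon. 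Since $U^{\textbf{T}}$ is faithful and the comonad identities hold underneath in $\A$, $(\bar{G},\bar{\varepsilon},\bar{\delta})$ is a comonad, and the extension conditions $U^{\textbf{T}}\bar{G}=GU^{\textbf{T}}$ etc.\ are visible from the formulas.

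For the reverse passage $\bar{G}\mapsto \lambda$, the formula $\lambda = U^{\textbf{T}}\varepsilon^{\textbf{T}}\bar{G}F^{\textbf{T}}\circ TG\eta$ is manifestly a natural transformation $TG\to GT$. The two triangle axioms reduce to the counit-triangle for the adjunction $F^{\textbf{T}}\dashv U^{\textbf{T}}$ and the extension identity $U^{\textbf{T}}\bar{\varepsilon}=\varepsilon U^{\textbf{T}}$. The two hexagons are proved by pasting: for the $\delta$-hexagon, one expands $\bar{\delta}$ at a free algebra and uses $U^{\textbf{T}}\bar{\delta}=\delta U^{\textbf{T}}$ together with naturality of $\varepsilon^{\textbf{T}}$; for the $\mu$-hexagon, one uses that $\bar{G}$ is a functor and applies it to the algebra-structure morphism $\mu_a : TT a \to Ta$, which is the $\textbf{T}$-algebra map from the free $\textbf{T}$-algebra on $Ta$.

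Finally, the two round trips are identities. Starting from $\lambda$, one computes $U^{\textbf{T}}\varepsilon^{\textbf{T}}_{\bar{G}F^{\textbf{T}}}\circ TG\eta$ at an object $a$, using $\bar{G}F^{\textbf{T}}(a)=(GTa, G\mu_a\cdot \lambda_{Ta})$ and the counit $\varepsilon^{\textbf{T}}_{(a,\xi_a)}=\xi_a$, collapsing via the $\mu$-hexagon and a unit law to $\lambda_a$. Starting from $\bar{G}$, substituting the reconstructed $\lambda$ back into $\bar{G}(a,\xi_a)=(Ga,G(\xi_a)\cdot\lambda_a)$ recovers the original $\textbf{T}$-algebra structure on $Ga$, because the extension $U^{\textbf{T}}\bar{G}=GU^{\textbf{T}}$ together with the description of $\bar{G}$ on the free algebra $F^{\textbf{T}}(a)$ forces this value.

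The main obstacle is the $\mu$-hexagon, both directions: in one direction it requires a careful diagram chase pasting two copies of $\lambda$ with the multiplication, in the other it relies on the subtle point that the extension of $\bar{G}$ to non-free $\textbf{T}$-algebras is uniquely determined by its behaviour on free ones together with the preservation of algebra-structure morphisms, which is exactly the content of the $\mu$-axiom for $\lambda$. All other verifications are routine diagram chases that only use naturality plus one axiom of $\lambda$ or of the (co)monad at a time.
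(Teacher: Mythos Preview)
The paper does not supply its own proof of this theorem: it is stated as a known result with the citation ``(see \cite{W})'' and no argument is given. Your outline is correct and is essentially the standard verification one finds in the literature on (mixed) distributive laws, so there is nothing substantive to compare.

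Two minor sharpenings of your sketch. In the round trip starting from $\lambda$, the collapse of
\[
(G\mu_a\cdot\lambda_{Ta})\cdot TG\eta_a
\]
to $\lambda_a$ is by \emph{naturality} of $\lambda$ with respect to $\eta_a$ (giving $\lambda_{Ta}\cdot TG\eta_a = GT\eta_a\cdot\lambda_a$) followed by the monad unit law $\mu_a\cdot T\eta_a=1$; the $\mu$-hexagon is not needed here. In the other round trip, the point you describe as ``the extension together with the description on free algebras forces this value'' is precisely the following computation: if $\theta_a$ denotes the $\textbf{T}$-algebra structure on $\bar G(a,\xi_a)$, then $\xi_a=\varepsilon^{\textbf{T}}_{(a,\xi_a)}$ is a $\textbf{T}$-morphism, hence so is $\bar G(\xi_a)=G(\xi_a)$, i.e.\ $G(\xi_a)\cdot\theta_{Ta}=\theta_a\cdot TG(\xi_a)$; combining this with $\lambda_a=\theta_{Ta}\cdot TG\eta_a$ and $\xi_a\cdot\eta_a=1$ gives $G(\xi_a)\cdot\lambda_a=\theta_a$ directly.
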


It follows from this theorem that if $$\lambda : \textbf{T}
\textbf{G} \to \textbf{G}\textbf{T}$$ is a mixed distributive law,
then $(\A_{\textbf{G}})^{\bar{\textbf{T}}}=(\A
^{\textbf{T}})_{\bar{\textbf{G}}}$. We write $(\A ^{\textbf{T}}
_{\textbf{G}})(\lambda)$ for this category. An object of this
category is a three-tuple $(a, \xi_a, \nu_a)$, where $(a, \xi_a)
\in \A ^\textbf{T}$, $(a, \nu_a) \in \A_{\textbf{G}}$, for which
$G(\xi_a) \cdot \lambda_a \cdot T(\nu_a)=\nu_a \cdot \xi_a$. A
morphism $f: (a, \xi_a, \nu_a) \to (a', \xi'_a, \nu'_a)$ in $(\A
^{\textbf{T}} _{\textbf{G}})(\lambda)$ is a morphism $f: a \to a'$
in $\A$ such that $\xi'_a \cdot T(f)=f \cdot \xi_a $ and $\nu'_a
\cdot f=G(f) \cdot \nu_a$.

\section{Entwining structures in monoidal categories}

Let $\V=(V, \otimes, I)$ be a monoidal category with coequalizers
such that the tensor product preserves the coequalizer in both
variables. Then for all algebras $\Al$ and
$\mathbb{B}=(B, e_B, m_B)$ and all $M \in \V_A$, $N \in {_A \V_B}$
and $P \in {_B\V}$, the tensor product $M \otimes_A N$ exists and
the canonical morphism $(M \otimes_A N) \otimes _B P \to M
\otimes_A (M \otimes_B P)$ is an isomorphism. Using MacLane's
coherence theorem (see, \cite{M}, XI.5), we may assume without
loss of generality that $\V$ is strict.

It is well known that every algebra $\mathbb{A}=(A,e_A,m_A)$ in
$\V$ defines a monad $\textbf{T}_{\mathbb{A}}$ on $\V$ by

\begin{itemize}
\item $T_{\mathbb{A}}(X)=X\otimes A$, \item
$(\eta_{T_{\mathbb{A}}})_X =X \otimes e_A : X \to X \otimes A$,
\item $(\mu_{T_{\mathbb{A}}})_X=X \otimes m_A : X\otimes A \otimes
A \to X \otimes A $,
\end{itemize} and that $\V ^{\textbf{T}_{\mathbb{A}}}$ is
(isomorphic to) the category $\V _{\mathbb{A}}$ of right
$A$-modules.

\bigskip
Dually, if $\mathbb{C}=(C,\varepsilon_C, \delta_C,)$ is a
coalgebra (=comonoid) in $\V$, then one defines a comonad
$\textbf{G}_{\mathbb{C}}$ on $\V$ by
\begin{itemize}
\item $G _{\mathbb{C}}(X)=X \otimes C$, \item
$(\varepsilon_{G_{\mathbb{C}}})_X =X \otimes \varepsilon_C : X
\otimes C \to  X,$ \item $(\delta_{G_{\mathbb{C}}})_X =X \otimes
\delta_C : X \otimes C \to X \otimes C \otimes C$,

\end{itemize} and
$\V_{\textbf{G}_{\mathbb{C}}}$ is (isomorphic to) the category
$\V^{\mathbb{C}}$ of right $C$-comodules.

\bigskip

Quite obviously, if $\lambda$ is a mixed distributive law from
$\textbf{T}_{\mathbb{A}}$ to $\textbf{G}_{\mathbb{C}}$, then the
morphism
$$ \lambda'=\lambda_I : C \otimes A  \to  A \otimes C$$
makes the following diagrams commutative:

$$
\xymatrix{  C  \ar[d]_{C \otimes e_A} \ar[dr]^{e_A \otimes C} & &
& C \otimes A
\ar[d]_{\lambda'} \ar[rd]^-{\varepsilon_{\mathbb{C}} \otimes A}&\\
C \otimes A \ar[r]_{\lambda '}& A \otimes C \,,& & A \otimes C
\ar[r]_{A \otimes \varepsilon_{\mathbb{C}}} & A ,}
$$
$$
\xymatrix{ C \otimes A \ar[d]_-{\lambda'} \ar[r]^-{\delta_C
\otimes A} & C\otimes C \otimes A \ar[r]^{C \otimes \lambda' } &
C\otimes A \otimes C \ar[d]^{ \lambda' \otimes C} & C\otimes A
\otimes A \ar[d]_{C \otimes m_A} \ar[r]^{\lambda' \otimes A} & A
\otimes C \otimes A \ar[r]^{A \otimes \lambda'} & A \otimes A
\otimes C \ar[d]^{m_A \otimes C}\\
A \otimes C \ar[rr]_{A \otimes \delta_C}&& A \otimes C \otimes C,
& C \otimes A \ar[rr]_{\lambda'} && A \otimes C \,\,.}
$$ Conversely, if $\lambda' : C \otimes A \to A \otimes C$ is a
morphism for which the above diagrams commute, then the natural
transformation $$- \otimes \lambda' :
T_{\mathbb{A}}G_{\mathbb{C}}(-)=-\otimes C \otimes A \to -\otimes
A \otimes C=G_{\mathbb{C}}T_{\mathbb{A}}(-)$$ is a mixed
distributive law from the monad $\textbf{T}_{\mathbb{A}}$ to the
comonad $\textbf{G}_{\mathbb{C}}$. It is easy to see that
$\lambda' =(- \otimes \lambda')_I$. When $I$ is a regular
generator in $\V$ and the tensor product preserves all colimits in
both variables, it is not hard to show that $\lambda \simeq -
\otimes \lambda_I$. When this is the case, then the
correspondences $\lambda \to \lambda_I$ and $\lambda' \to -\otimes
\lambda'$ are inverses of each other.

\begin{definition} An entwining structure $(\mathbb{C},\mathbb{A},\lambda)$
consists of an algebra $\mathbb{A}=(A,e_A,m_A)$   and a coalgebra
$\mathbb{C}=(C,\varepsilon_C, \delta_C)$  in $\V$ and a morphism
$\lambda : C \otimes A \to A \otimes C$ such that the natural
transformation $$-\otimes \lambda
:T_{\mathbb{A}}G_{\mathbb{C}}(-)=-\otimes C \otimes A \to -\otimes
A \otimes C=G_{\mathbb{C}}T_{\mathbb{A}}(-)$$ is a mixed
distributive law from the monad $\textbf{T}_{\mathbb{A}}$ to the
comonad $\textbf{G}_{\mathbb{C}}$.
\end{definition}

Let be $\la$ be an entwining structure and let $\GG$ be the
comonad on $\V _{\mathbb{A}}$ that extends
$\textbf{G}=\textbf{G}_{\mathbb{C}}$. Then we know that, for any
$(V,\xi_V) \in \V _{\mathbb{A}}$,

$$
\xymatrix{ \bar{G}(V,\xi_V)=(V \otimes C, V \otimes C \otimes A
\ar[r]^-{V \otimes \lambda} & V \otimes A \otimes C \ar[r]^-{\xi_V
\otimes C} & V \otimes C) .}$$ In particular, since $(A, m_A) \in
\V _{\mathbb{A}}$, $A \otimes C$ is a right $A$-module with right
action
$$
\xymatrix{ \xi_{A \otimes C} : A \otimes C \otimes A \ar[r]^-{A
\otimes \lambda} & A \otimes A \otimes C \ar[r]^-{m_a \otimes C} &
A \otimes C.}$$

\begin{lemma} View $A \otimes C$ as a left $A$-module through
$\bar{\xi}_{A \otimes C}=m_A \otimes C$. Then $(A \otimes C,
\bar{\xi}_{A \otimes C}, \xi_{A \otimes C})$ is an
$A$-$A$-bimodule.
\end{lemma}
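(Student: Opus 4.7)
The plan is to verify the bimodule compatibility axiom, namely that the two composites
\[
A \otimes A \otimes C \otimes A \longrightarrow A \otimes C
\]
obtained by acting on the left first and then on the right, or in the opposite order, agree. That both $\bar{\xi}_{A \otimes C}$ and $\xi_{A \otimes C}$ define associative, unital actions is already built in: for $\bar{\xi}_{A \otimes C} = m_A \otimes C$ it reduces to the associativity and unitality of $\mathbb{A}$, while for $\xi_{A \otimes C}$ it follows from the extension $\bar{\textbf{G}}$ of Theorem~1 applied to the $A$-module $(A, m_A)$. So the only content is the left-right compatibility.

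Writing everything out explicitly, the two composites to be compared are
\[
(m_A \otimes C) \circ (A \otimes \lambda) \circ (m_A \otimes C \otimes A)
\quad \text{and} \quad
(m_A \otimes C) \circ (A \otimes m_A \otimes C) \circ (A \otimes A \otimes \lambda).
\]
First I would apply the interchange law for $\otimes$ to the morphisms acting on disjoint tensor factors: both $(A \otimes \lambda) \circ (m_A \otimes C \otimes A)$ and $(m_A \otimes A \otimes C) \circ (A \otimes A \otimes \lambda)$ are equal to $m_A \otimes \lambda$. This rewrites the first composite as
\[
(m_A \otimes C) \circ (m_A \otimes A \otimes C) \circ (A \otimes A \otimes \lambda).
\]
Second, by the associativity of $m_A$, we have $m_A \circ (m_A \otimes A) = m_A \circ (A \otimes m_A)$, so tensoring with $C$ and precomposing with $A \otimes A \otimes \lambda$ shows that the two composites coincide.

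There is no serious obstacle; the argument is purely a matter of bookkeeping the tensor factors. It is worth noting that the entwining axioms for $\lambda$ play no role here — only the associativity of $\mathbb{A}$ and the functoriality of $\otimes$ are used. Intuitively, this is because the left action acts on the first factor $A$ of $A \otimes C$, while the right action uses $\lambda$ to shuttle the incoming $A$ past $C$ and then multiplies; since multiplications on opposite sides of $A$ commute by associativity, compatibility is automatic.
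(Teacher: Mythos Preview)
Your proof is correct and essentially identical to the paper's: both use the interchange law $(A \otimes \lambda)\cdot(m_A \otimes C \otimes A)=(m_A \otimes A \otimes C)\cdot(A \otimes A \otimes \lambda)$ followed by the associativity of $m_A$ to conclude the bimodule compatibility. Your closing remark that the entwining axioms for $\lambda$ are not needed here is also in line with the paper's argument.
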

\begin{proof} Clearly $(A \otimes C, \bar{\xi}_{A \otimes C})
\in {_{\mathbb{A}}{\V}}.$ Moreover, since $(A \otimes
\lambda)\cdot (m_A \otimes C \otimes A)= (m_A \otimes A \otimes
C)\cdot (A \otimes A \otimes \lambda)$, it follows from the
associativity of $m_A$ that the diagram
$$
\xymatrix{A \otimes A \otimes C \otimes A \ar[r]^-{A \otimes A
\otimes \lambda} \ar[d]_{m_A \otimes  C \otimes A} & A \otimes A
\otimes A \otimes C \ar[d]^{A \otimes m_A \otimes C }\\
A \otimes C \otimes A \ar[d]_{A \otimes \lambda} & A \otimes A
\otimes C \ar[d]^{m_A \otimes C }\\
A \otimes A \otimes C  \ar[r]_{m_A \otimes C }& A \otimes C }$$ is
commutative, which just means that $(A \otimes C, \bar{\xi}_{A
\otimes C}, \xi_{A \otimes C})$ is an $A$-$A$-bimodule.
\end{proof}

\bigskip

Since $\bar{\varepsilon}_{(A,m_A)} : \bar{G}(A,m_A) \to (A,m_A)$
and $\bar{\delta}_{(A,m_A)}: \bar{G}(A,m_A) \to \bar{G}^2 (A,m_A)$
are morphisms of right $A$-modules, and since
$U_{\mathbb{A}}({\bar{\varepsilon}}_{(A,m_A)})=\varepsilon_A =$ \(
( A \otimes C \stackrel{{A \otimes
\varepsilon_{\mathbb{C}}}}{\longrightarrow}  A) \) and $U_A
(\bar{\delta}_{(A,m_A)})=\delta_{\mathbb{C}}=$\( (A \otimes C
\stackrel{A \otimes\delta_{\mathbb{C}}}{\longrightarrow} A \otimes
C \otimes C) \), it follows that \( A \otimes C \stackrel{{A
\otimes \varepsilon_{\mathbb{C}}}}{\longrightarrow} A \) and \( A
\otimes C \stackrel{A \otimes\delta_{\mathbb{C}}}{\longrightarrow}
A \otimes C \otimes C \) are both morphisms of right $A$-modules.
Clearly they are also morphisms of left $A$-modules with the
obvious left $A$-module structures arising from the multiplication
$m_A : A\otimes A \to A$, and hence morphisms of
$A$-$A$-bimodules. Since $ \C$ is a coalgebra in $\V$, it follows
that the triple $(\underline{A \otimes C})_{\lambda}=(A \otimes C
, \varepsilon_{(\underline{A \otimes C})_{\lambda}},
\delta_{(\underline{A \otimes C})_{\lambda}})$, where
$\varepsilon_{(\underline{A \otimes C})_{\lambda}}=$ \( A \otimes
C \stackrel{{A \otimes \varepsilon_{\mathbb{C}}}}{\longrightarrow}
A \) and $\delta_{(\underline{A \otimes C})_{\lambda}}=$ \( A
\otimes C \stackrel{A \otimes\delta_{\mathbb{C}}}{\longrightarrow}
A \otimes C \otimes C \), is an $A$-coring. Since, for any $V \in
\V_{\mathbb{A}},$ $V \otimes_A (A \otimes C) \simeq V \otimes C,$
the comonad $\bar{\textbf{G}}$ is isomorphic to the comonad
$\textbf{G}_{(\underline{A \otimes C})_{\lambda}}$. Thus, any
entwining structure $\la$ defines a right $A$-module structure
$\xi_{A\otimes C}$ on $A \otimes C$ such that $(A \otimes C,
\bar{\xi}_{A \otimes C}=m_A \otimes C, \xi_{A \otimes C})$ is an
$A$-$A$-bimodule and the triple $(\underline{A \otimes
C})_{\lambda}=(A \otimes C , \varepsilon_{(\underline{A \otimes
C})_{\lambda}}, \delta_{(\underline{A \otimes C})_{\lambda}})$ is
an $A$-coring. Moreover, when this is the case, the comonad
$\textbf{G}_{(\underline{A \otimes C})_{\lambda}}$ on
$\V_{\mathbb{A}}$ extends the comonad $\textbf{G}_{\mathbb{C}}$.
It follows that $\V_{\mathbb{A}} ^{(\underline{A \otimes
C})_{\lambda}}=\V^{\mathbb{C}}_{\mathbb{A}}(\lambda).$

Conversely, let $\mathbb{A}=(A,e_A,m_A)$ be an algebra and $\C$ a
coalgebra in $\V$, and suppose that $A \otimes C $ has the
structure $\xi_{A \otimes C }$ of a right $A$-module such that the
triple \begin{equation}\xymatrix{ \underline{A \otimes C}=((A
\otimes C , m_A \otimes C, \xi_{A \otimes C }), A \otimes C
\ar[r]^-{A \otimes \varepsilon_{\mathbb{C}}} &A, \,\, A \otimes C
\ar[r]^-{A \otimes \delta_{\mathbb{C}}} &A \otimes C \otimes
C)}\end{equation} is an $A$-coring. Then it is easy to see that
the comonad $\textbf{G}_{\underline{A \otimes C}}$ on
$\V_{\mathbb{A}}$ extends the comonad $\textbf{G}_{\mathbb{C}}$ on
$\V$, and thus defines an entwining structure
$\lambda_{\underline{A \otimes C}}: C \otimes A \to A \otimes C$.

Summarising, we have

\begin{theorem}Let $\mathbb{A}=(A,e_A,m_A)$ be an algebra and $\C$ a
coalgebra in $\V$. Then there exists a bijection between right
$A$-module structures $\xi_{A \otimes C }$ making $(A \otimes C ,
m_A \otimes C, \xi_{A \otimes C })$ an $A$-bimodule for which the
triple (1) is an $A$-coring and entwining structures $(\mathbb{C},
\mathbb{A}, \lambda)$, given by:
$$ \xymatrix{
\xi_{A \otimes C} \ar[r] &(\lambda_{\underline{A \otimes C}} : C
\otimes A  \ar[rr]^-{e_A \otimes C \otimes A}&& A \otimes C
\otimes A \ar[r]^-{\xi_{A \otimes C}}& A \otimes C)}$$ with
inverse given by $$ \xymatrix{\lambda \ar[r]& (\xi_{A \otimes C} :
A \otimes C \otimes A \ar[r]^-{A \otimes \lambda}& A \otimes A
\otimes C \ar[rr]^-{m_A \otimes C}&& A \otimes C)}$$

Under this equivalence $\V_{\mathbb{A}} ^{(\underline{A \otimes
C})_{\lambda}}=\V_{\mathbb{A}}^{\mathbb{C}}(\lambda)$.

\end{theorem}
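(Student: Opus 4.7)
The plan is to verify three things in sequence: that the two assignments in the statement are well-defined, that they are mutually inverse, and that they yield the final identification of the two categories. Most of the conceptual content has already been worked out in the discussion preceding the theorem, so what remains is mostly bookkeeping around Theorem 1.

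For well-definedness, the forward direction is supplied by Lemma 2 and the paragraph following it: given an entwining $\lambda$, the formula $\xi_{A\otimes C}=(m_A\otimes C)\cdot(A\otimes\lambda)$ makes $A\otimes C$ into an $A$-$A$-bimodule, and both $A\otimes\varepsilon_\mathbb{C}$ and $A\otimes\delta_\mathbb{C}$ are bimodule morphisms, so that $(\underline{A\otimes C})_\lambda$ is an $A$-coring. In the reverse direction, any bimodule structure on $A\otimes C$ for which the triple $(1)$ is an $A$-coring defines a comonad $\textbf{G}_{\underline{A\otimes C}}$ on $\V_{\mathbb{A}}$; via the canonical isomorphism $V\otimes_A(A\otimes C)\simeq V\otimes C$, this comonad extends $\textbf{G}_\mathbb{C}$, and Theorem 1 then produces the associated entwining $\lambda_{\underline{A\otimes C}}$.

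Next I would check that the two assignments are mutually inverse by two short computations using only the unit laws of $\mathbb{A}$ and the bimodule axiom. Starting from $\lambda$, the prescribed composite gives
$$\xi_{A\otimes C}\cdot(e_A\otimes C\otimes A)=(m_A\otimes C)\cdot(e_A\otimes A\otimes C)\cdot\lambda=\lambda$$
by the interchange law and the left unit axiom for $\mathbb{A}$. Starting from a bimodule structure $\xi_{A\otimes C}$ and setting $\lambda=\xi_{A\otimes C}\cdot(e_A\otimes C\otimes A)$, the bimodule compatibility
$$(m_A\otimes C)\cdot(A\otimes\xi_{A\otimes C})=\xi_{A\otimes C}\cdot(m_A\otimes C\otimes A)$$
together with the right unit axiom recovers $\xi_{A\otimes C}$ from $(m_A\otimes C)\cdot(A\otimes\lambda)$.

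Finally, the identification $\V_{\mathbb{A}}^{(\underline{A\otimes C})_\lambda}=\V_{\mathbb{A}}^{\mathbb{C}}(\lambda)$ follows from the observation already in the text that $\textbf{G}_{(\underline{A\otimes C})_\lambda}\simeq\bar{\textbf{G}}$ as comonads on $\V_{\mathbb{A}}$ via the isomorphism $V\otimes_A(A\otimes C)\simeq V\otimes C$; taking Eilenberg-Moore categories of coalgebras and combining with the equality $(\V_\mathbb{A})_{\bar{\textbf{G}}}=\V_{\mathbb{A}}^{\mathbb{C}}(\lambda)$ recorded after Theorem 1 yields the claim. The only mildly delicate point is the implicit identification of a mixed distributive law $-\otimes\lambda$ with its single component $\lambda$ at $I$, but this is harmless because Definition 3 restricts attention to distributive laws of this form.
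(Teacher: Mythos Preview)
Your proposal is correct and follows the same route as the paper, which in fact presents the theorem as a summary (``Summarising, we have'') of the preceding discussion rather than giving a separate proof; your explicit verification that the two assignments are mutually inverse, using the unit law and the bimodule compatibility, makes explicit a step the paper leaves to the reader. One minor slip: your cross-references are off by one (the relevant lemma is Lemma~3 and the definition of entwining structure is Definition~2 in the paper's numbering).
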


\section{Some categorical results}

Let $\G$ be a comonad on a category $\A$, and let $U_{\textbf{G}}:
A_{\textbf{G}} \to \A$ be the forgetful functor. Fix a functor $F
: \B \to \A$, and consider a functor $\bar{F} : \B \to
A_{\textbf{G}}$ making the diagram

\begin{equation}
\xymatrix{ \B \ar[rr]^-{\bar{F}} \ar[dr]_{F} && A_{{\textbf{G}}}
\ar[dl]^{U_{\textbf{G}}}\\
& \A & } \end{equation} commutative. Then $\bar{F} (b)=(F(b),
\alpha_{F(b)})$ for some $\alpha_{F(b)} : F(b) \to GF(b)$.
Consider the natural transformation
\begin{equation} \bar{\alpha}_F : F \to GF,
\end{equation} whose $b$-component is $\alpha_{F(b)}$.

It is proved in \cite {D} that:

\begin{theorem} Suppose that $F$ has a right adjoint
$R : \A \to \B$ with unit $ \eta: 1 \to FU$ and counit
$\varepsilon : FU \to 1$. Then the composite
$$\xymatrix{t_{\bar{F}}:
FU \ar[r]^-{\bar{\alpha}_{_F} U} & GFU \ar[r]^{G \varepsilon}
&G.}$$ is a morphism from the comonad $\textbf{G}'=(FU,
\varepsilon, F \eta U)$ generated by the adjunction $\eta,
\varepsilon : F \dashv U : \B \to \A$ to the comonad $\textbf{G}$.
Moreover, the assignment
$$\bar{F} \longrightarrow t_{\bar{F}}$$ yields a one to one
correspondence between functors $\bar{F} : \B \to \A_G$ making the
diagram (2) commutative and morphisms of comonads $t_{\bar{F}} :
\textbf{G}' \to \textbf{G}$.
\end{theorem}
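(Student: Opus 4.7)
The plan is to verify directly the two conditions defining a comonad morphism, and then exhibit an explicit inverse to the assignment $\bar{F} \mapsto t_{\bar{F}}$ using the adjoint transpose.

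First I would show that $t_{\bar{F}} = G\varepsilon \cdot \bar{\alpha}_F U$ is a comonad morphism from $\mathbf{G}'=(FU,\varepsilon,F\eta U)$ to $\mathbf{G}$. For the counit compatibility, I compute
$\varepsilon_{\mathbf{G}} \cdot t_{\bar{F}} = \varepsilon_{\mathbf{G}} \cdot G\varepsilon \cdot \bar{\alpha}_F U$,
push $\varepsilon_{\mathbf{G}}$ past $G$ by naturality to get $\varepsilon \cdot \varepsilon_{\mathbf{G}}F U \cdot \bar{\alpha}_F U$, and then invoke the fact that each $(F(b),\alpha_{F(b)})$ is a $\mathbf{G}$-coalgebra to conclude $\varepsilon_{\mathbf{G}} F \cdot \bar{\alpha}_F = 1_F$. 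For the comultiplication compatibility I have to show
$\delta_{\mathbf{G}} \cdot t_{\bar{F}} = G t_{\bar{F}} \cdot t_{\bar{F}} FU \cdot F\eta U$.
On the left I use naturality of $\delta_{\mathbf{G}}$ to pull $G\varepsilon$ outside and then the coassociativity condition $\delta_{\mathbf{G}} F \cdot \bar{\alpha}_F = G\bar{\alpha}_F \cdot \bar{\alpha}_F$ coming from the coalgebra structure. On the right I rewrite $\bar{\alpha}_F UFU \cdot F\eta U$ via naturality of $\bar{\alpha}_F$ at $\eta U$ as $GF\eta U \cdot \bar{\alpha}_F U$, and then collapse $\varepsilon FU \cdot F\eta U = 1_{FU}$ by the triangle identity. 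Both sides reduce to $Gt_{\bar{F}} \cdot \bar{\alpha}_F U$.

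Next I would construct the inverse correspondence. Given a comonad morphism $\tau : \mathbf{G}' \to \mathbf{G}$, define the natural transformation $\bar{\alpha} = \tau F \cdot F\eta : F \to GF$ and set $\bar{F}(b) = (F(b), \bar{\alpha}_b)$. To see that this is a $\mathbf{G}$-coalgebra, I verify the counit axiom by $\varepsilon_{\mathbf{G}} F \cdot \bar{\alpha} = (\varepsilon_{\mathbf{G}}\cdot\tau) F \cdot F\eta = \varepsilon F \cdot F\eta = 1_F$, using the counit condition on $\tau$ and the triangle identity. For the coassociativity axiom I expand both $\delta_{\mathbf{G}} F \cdot \bar{\alpha}$ and $G\bar{\alpha} \cdot \bar{\alpha}$, apply the comultiplicativity of $\tau$ to the first, and apply naturality of $\tau$ at $F\eta$ together with naturality of $\eta$ at $\eta$ to the second; both sides land on $G\tau F \cdot \tau FUF \cdot F\eta UF \cdot F\eta$.

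Finally I would check that the two assignments are mutually inverse, which is where the triangle identities do the decisive work. Starting from $\bar{F}$ and reading off $\bar{\alpha}' = t_{\bar{F}} F \cdot F\eta$, I use naturality of $\bar{\alpha}_F$ at $\eta$ to rewrite $\bar{\alpha}_F UF \cdot F\eta$ as $GF\eta \cdot \bar{\alpha}_F$, whence $\bar{\alpha}' = G(\varepsilon F \cdot F\eta) \cdot \bar{\alpha}_F = \bar{\alpha}_F$. Starting from $\tau$ and forming $t_{\bar{F}_\tau} = G\varepsilon \cdot \tau FU \cdot F\eta U$, I apply naturality of $\tau$ at $\varepsilon$ to get $\tau \cdot FU\varepsilon \cdot F\eta U = \tau \cdot F(U\varepsilon \cdot \eta U) = \tau$. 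The main obstacle is purely bookkeeping: keeping all the whiskered natural transformations and the two triangle identities straight, since each verification is a fairly long composition in which essentially every naturality square and every adjunction identity is used exactly once.
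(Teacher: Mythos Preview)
Your proof is correct: the counit and comultiplication checks for $t_{\bar{F}}$, the construction of the inverse via $\bar\alpha=\tau F\cdot F\eta$, and the two round-trip verifications all go through exactly as you outline, with each naturality square and triangle identity used in the right place. One very small remark: in the coassociativity check for the inverse direction you need $F\eta UF\cdot F\eta = FUF\eta\cdot F\eta$, which you correctly identify as $F$ applied to naturality of $\eta$ at itself; it is worth stating explicitly, since it is the only point where that square appears.

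The paper, however, does not prove this theorem at all: it simply records the statement with the attribution ``It is proved in \cite{D} that:'' and moves on. So there is no approach in the paper to compare against. Your direct elementary verification is a complete substitute for the citation; it is exactly the standard argument (adjoint transpose of a $\mathbf{G}$-coalgebra structure on $F$ is a comonad map $FU\to G$, and vice versa), and nothing shorter is available without invoking the general machinery of Dubuc's book.
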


Write $\beta_U $ for the composite $\xymatrix{ U \ar[r]^-{\eta U}& UFU \ar[r]^-{U t_{\bar{F}}} & UG}.$

\begin{proposition}
The equalizer $\bar{U}$, if it exists, of the following diagram $$\xymatrix{ UU_G
\ar@{->}@<0.5ex>[rr]^-{UU_G \eta_G} \ar@ {->}@<-0.5ex>
[rr]_-{\beta_U U_G}&& UGU_G=UU_GF_GU_G,}$$ where $\eta_G : 1 \to F_GU_G$ is the unit of the adjunction $U_G \dashv F_G,$ is right adjoint to $\overline{F}$.

\end{proposition}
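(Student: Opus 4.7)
The plan is to exhibit, for each $(a, \nu_a) \in \A_{\textbf{G}}$ and each $b \in \B$, a bijection
\[
\A_{\textbf{G}}(\bar{F}(b), (a, \nu_a)) \;\cong\; \B(b, \bar{U}(a, \nu_a))
\]
natural in $b$. Unwinding definitions, an element of the left-hand set is a morphism $f : F(b) \to a$ in $\A$ satisfying the coalgebra compatibility
\[
\nu_a \cdot f \;=\; G(f) \cdot \alpha_{F(b)},
\]
while by the universal property of the equalizer an element of the right-hand set is a morphism $g : b \to U(a)$ satisfying
\[
U\nu_a \cdot g \;=\; (\beta_U)_a \cdot g.
\]
The adjunction $F \dashv U$ already bijects such $f$'s and $g$'s via $g = U(f) \cdot \eta_b$, so it suffices to show that these two conditions correspond under that bijection.

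The identity $U\nu_a \cdot g = U(\nu_a \cdot f) \cdot \eta_b$ is immediate. For the other side I plan to compute
\[
(\beta_U)_a \cdot g \;=\; U t_{\bar{F}, a} \cdot \eta_{U(a)} \cdot U(f) \cdot \eta_b
\]
and then apply, in sequence, naturality of $\eta$ at the morphism $U(f)$, naturality of the comonad morphism $t_{\bar{F}} : FU \to G$ at $f$, and naturality of $\eta$ at $\eta_b$. After these rewrites, the expression collapses to $UG(f) \cdot U\bigl(t_{\bar{F}, F(b)} \cdot F\eta_b\bigr) \cdot \eta_b$, so everything reduces to the single key identity
\[
t_{\bar{F}, F(b)} \cdot F\eta_b \;=\; \alpha_{F(b)}.
\]

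This last identity is the main obstacle of the proof, but it is short. I will expand $t_{\bar{F}, F(b)} = G\varepsilon_{F(b)} \cdot \alpha_{FUF(b)}$ from the definition in the preceding theorem, rewrite $\alpha_{FUF(b)} \cdot F\eta_b$ as $GF(\eta_b) \cdot \alpha_{F(b)}$ by naturality of $\bar{\alpha}_F : F \to GF$ at $\eta_b : b \to UF(b)$, and then invoke the triangle identity $\varepsilon_{F(b)} \cdot F\eta_b = 1_{F(b)}$ of the adjunction $F \dashv U$ to collapse $G(\varepsilon_{F(b)} \cdot F\eta_b)$ to the identity on $GF(b)$. Combining this with the earlier rewriting yields $(\beta_U)_a \cdot g = U\bigl(G(f) \cdot \alpha_{F(b)}\bigr) \cdot \eta_b$, and because $h \mapsto U(h) \cdot \eta_b$ is a bijection $\A(F(b), G(a)) \cong \B(b, UG(a))$, the two compatibility conditions are equivalent.

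Finally, $\bar{U}$ is promoted to a functor by defining $\bar{U}(h)$ for a coalgebra morphism $h : (a, \nu_a) \to (a', \nu'_{a'})$ via the universal property of the equalizer at $(a', \nu'_{a'})$ applied to $U(h)$ precomposed with the defining equalizer morphism; the required compatibility uses naturality of $\beta_U$ together with the coalgebra condition on $h$. Naturality of the hom-bijection in both variables then falls out of routine diagram chasing with the adjunction and equalizer universals, so the only genuinely non-formal content of the proof is the identity $t_{\bar{F}, F(b)} \cdot F\eta_b = \alpha_{F(b)}$.
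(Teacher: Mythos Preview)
Your argument is correct. The paper itself does not prove this proposition at all: its proof consists solely of the citation ``See \cite{B} or \cite{D}.'' So there is no in-paper argument to compare against, and your direct hom-set verification via the adjunction $F\dashv U$ together with the universal property of the equalizer is a standard and valid route.

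It is worth noting that the identity you single out as the ``only genuinely non-formal content,'' namely $t_{\bar F,\,F(b)}\cdot F\eta_b=\alpha_{F(b)}$, is exactly the statement the paper records separately as the subsequent Lemma (proved there by the same naturality-plus-triangle-identity computation you outline). So your proof in effect absorbs that lemma, whereas the paper chooses to isolate it for later use in showing $\A_{t_{\bar F}}\cdot K_{\textbf{G}'}=\bar F$.
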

\begin{proof}
See \cite{B} or \cite{D}.
\end{proof}

Let $\bar{F} : \B \to \A_{\textbf{G}}$ be a functor making (2)
commutative and let $t_{\bar{F}} : \textbf{G}' \to \textbf{G}$ be
the corresponding morphism of comonads.  Consider the following
composition
$$\xymatrix{
\B \ar[r]^{K_{\textbf{G}'}} & \A_{\textbf{G}'}
\ar[r]^{\A_{t_{\bar{F}}}}& \A_{\textbf{G}},}
$$ where
\begin{itemize}

\item $K_{\textbf{G}'} : \B \to \A_{\textbf{G}'}, \,\,
K_{\textbf{G}'}(b)=(F(b), F(\eta_b))$ is the Eilenberg-Moore
comparison functor for the comonad $\textbf{G}'$.

\item $A_{t_{\bar{F}}}$ is the functor $$((a, \theta_a) \in
\A_{\textbf{G}}') \longrightarrow ((a, (t_{\bar{F}})_a \cdot
\theta_a) \in \A_{\textbf{G}})$$ induced by the morphism of
comonads $t_{\bar{F}} : \textbf{G}' \to \textbf{G}.$
\end{itemize}

\begin{lemma} The diagram
\begin{equation}
\xymatrix{ \B \ar[r]^{K_{G'}} \ar[dr]_{{\bar{F}}} &
\A_{G'} \ar[d]^{\A_{{t_{\bar{F}}}}}\\
& \A_{G}}
\end{equation} is commutative.
\end{lemma}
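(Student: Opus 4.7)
The plan is to verify the commutativity of (4) by a direct pointwise computation on objects (the morphism case is automatic, since both composites act on an arrow $f : b \to b'$ of $\B$ simply by sending it to $F(f)$ viewed as a morphism of the appropriate coalgebra category). So let $b \in \B$. Applying the composite $\A_{t_{\bar{F}}} \circ K_{\textbf{G}'}$ produces
$$\A_{t_{\bar{F}}}(F(b), F(\eta_b)) = (F(b),\; (t_{\bar{F}})_{F(b)} \cdot F(\eta_b)),$$
while $\bar{F}(b) = (F(b), \alpha_{F(b)})$. Both carry the same underlying object $F(b)$, so the task reduces to showing
$$(t_{\bar{F}})_{F(b)} \cdot F(\eta_b) \;=\; \alpha_{F(b)}.$$

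The strategy is to unfold $t_{\bar{F}} = G\varepsilon \cdot \bar{\alpha}_F U$ at the object $F(b)$, which gives $(t_{\bar{F}})_{F(b)} = G\varepsilon_{F(b)} \cdot \alpha_{FUF(b)}$. Substituting, I need
$$G\varepsilon_{F(b)} \cdot \alpha_{FUF(b)} \cdot F(\eta_b) \;=\; \alpha_{F(b)}.$$
This follows from two ingredients used in succession: first, naturality of $\bar{\alpha}_F : F \to GF$ applied to the morphism $\eta_b : b \to UF(b)$ of $\B$ rewrites the middle piece as $\alpha_{FUF(b)} \cdot F(\eta_b) = GF(\eta_b) \cdot \alpha_{F(b)}$; then the triangle identity $\varepsilon F \cdot F\eta = 1_F$ of the adjunction $F \dashv U$ collapses $G\varepsilon_{F(b)} \cdot GF(\eta_b) = G(\varepsilon_{F(b)} \cdot F(\eta_b)) = 1_{GF(b)}$, yielding $\alpha_{F(b)}$.

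There is no serious obstacle here; the only thing one must be careful about is to read off correctly the component $(t_{\bar{F}})_{F(b)}$ from the definition of $t_{\bar{F}}$ in Theorem~4 and to recognize that it is precisely the combination of one naturality square for $\bar{\alpha}_F$ and one triangle identity that is needed. Once the object-level equality is established, commutativity of (4) on morphisms is immediate from the fact that $K_{\textbf{G}'}$ and $\A_{t_{\bar{F}}}$ act as $F$ and as the identity on underlying morphisms, respectively, so both composites agree with $\bar{F}$ on arrows.
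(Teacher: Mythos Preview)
Your proof is correct and follows essentially the same route as the paper's: both compute $(\A_{t_{\bar{F}}}\cdot K_{\textbf{G}'})(b)$ on objects, unfold $(t_{\bar{F}})_{F(b)}$ as $G\varepsilon_{F(b)}\cdot(\bar{\alpha}_F)_{UF(b)}$, apply naturality of $\bar{\alpha}_F$ at $\eta_b$, and finish with the triangle identity $\varepsilon F\cdot F\eta=1_F$. You are slightly more explicit than the paper in naming the triangle identity and in remarking on the morphism case, but the argument is the same.
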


\begin{proof} Let $b \in \B$. Then $K_{\textbf{G}'}(b)=(F(b),
F(\eta_b))$ and $\A_{t_{\bar{F}}}(F(b), F(\eta_b))=(F(b),
(t_{\bar{F}})_{F(b)} \cdot F(\eta_b)).$ Since
$(t_{\bar{F}})_{F(b)}$ is the composite
$$ \xymatrix{FUF(b) \ar[rr]^-{(\bar{\alpha}_F)_{UF(b)}} && GFUF(b)
\ar[r]^-{G \varepsilon _{F(b)}} & GF(b),}$$ and since by
naturality of $\bar{\alpha}_F$, the diagram
$$\xymatrix{
F(b) \ar[r]^{(\bar{\alpha})_b} \ar[d]_{F(\eta_b)} & GF(b)
\ar[d]^{GF(\eta_b)}\\
FUF(b) \ar[r]_{(\bar{\alpha})_{UF(b)}} & GFUF(b)}$$ commutes, we
have $$(t_{\bar{F}})_{F(b)} \cdot
F(\eta_b)=G(\varepsilon_{F(b)})\cdot (\bar{\alpha}_F)_{UF(b)}
\cdot F(\eta_b)=G (\varepsilon_{F(b)}) \cdot GF (\eta_b) \cdot
(\bar{\alpha}_F)_b=(\bar{\alpha}_F)_b=\alpha_{F(b)}.
$$ Thus $$(\A_{t_{\bar{F}}} \cdot K_{\textbf{G}'}
)(b)=\A_{t_{\bar{F}}}(K_{\textbf{G}'}(b))= \A_{t_{\bar{F}}} (F(b),
F(\eta_b))= (F(b), (t_{\bar{F}})_{F(b)}\cdot F(\eta_b))=(F(b),
\alpha_{F(b)}),$$ which just means that $\A_{t_{\bar{F}}} \cdot
K_{\textbf{G}'}=\bar{F}$.
\end{proof}

We are now ready to prove the following

\begin{theorem} Let $\textbf{G}$ be a comonad on a category $\A$,
$\eta, \varepsilon : F \dashv U :\B \to \A$ an adjunction and
$\bar{F} :\B \to \A_G$ a functor with $U_G \cdot \bar{F} =F$. Then
the following are equivalent:
\begin{itemize}
\item [\emph{(i)}]The functor $\bar{F}$ is an equivalence. \item
[\emph{(ii)}]The functor $F$ is comonadic and the morphism of
comonads $$t_{\bar{F}}: \textbf{G}\,'=(FU, \varepsilon, F \eta U)
\to \textbf{G}$$ is an isomorphism.
\end{itemize}
\end{theorem}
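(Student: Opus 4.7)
The plan is to exploit the factorization $\bar{F} = \A_{t_{\bar{F}}} \cdot K_{\textbf{G}'}$ established in the preceding Lemma as the backbone of the argument. The implication (ii) $\Rightarrow$ (i) is immediate from this: comonadicity of $F$ means precisely that $K_{\textbf{G}'}$ is an equivalence, while an isomorphism of comonads $t_{\bar{F}}$ yields an isomorphism of categories $\A_{t_{\bar{F}}}$ (with explicit inverse $\A_{t_{\bar{F}}^{-1}}$), so the factorization exhibits $\bar{F}$ as the composite of an equivalence and an isomorphism, and hence as an equivalence.

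For (i) $\Rightarrow$ (ii), I would assume $\bar{F}$ is an equivalence with quasi-inverse $\bar{U}$ and first prove that $t_{\bar{F}}$ is an isomorphism. The key observation is that $\bar{U}F_{\textbf{G}}$, being a composite of right adjoints, is right adjoint to $F = U_{\textbf{G}}\bar{F}$, so by uniqueness of right adjoints there is a canonical natural isomorphism $U \cong \bar{U}F_{\textbf{G}}$. Combining this with the counit isomorphism $\bar{F}\bar{U} \cong 1_{\A_{\textbf{G}}}$ of the equivalence yields a natural isomorphism
\[ FU \;\cong\; F\bar{U}F_{\textbf{G}} \;=\; U_{\textbf{G}}\bar{F}\bar{U}F_{\textbf{G}} \;\cong\; U_{\textbf{G}}F_{\textbf{G}} \;=\; G, \]
which is compatible with the counits and comultiplications (being assembled from those of $\textbf{G}$ and the adjunction data) and is therefore an isomorphism of comonads. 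I then identify this isomorphism with $t_{\bar{F}}$. Once that is done, $\A_{t_{\bar{F}}}$ is an isomorphism of categories and the factorization rewrites as $K_{\textbf{G}'} = \A_{t_{\bar{F}}^{-1}} \cdot \bar{F}$, a composite of an isomorphism of categories and an equivalence; so $K_{\textbf{G}'}$ is an equivalence and hence $F$ is comonadic.

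The step I expect to require the most care is the identification of the abstractly constructed isomorphism $FU \cong G$ with $t_{\bar{F}}$. The cleanest route avoids a direct calculation: I would view both as lifts $\bar{F}(Ua) \to F_{\textbf{G}}(a)$ in $\A_{\textbf{G}}$ via the universal property of the cofree coalgebra; using the canonical iso $\bar{U}F_{\textbf{G}} \cong U$ and the triangle identities for $F \dashv U$, both correspond under the adjunction $\bar{F} \dashv \bar{U}$ to $\mathrm{id}_{Ua}$, and so must coincide.
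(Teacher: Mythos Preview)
Your argument for (ii) $\Rightarrow$ (i) is exactly the paper's. For (i) $\Rightarrow$ (ii), however, the paper proceeds in the opposite order and more directly. It first observes that $F = U_{\textbf{G}}\cdot \bar{F}$ is the composite of the comonadic functor $U_{\textbf{G}}$ with an equivalence, hence is itself comonadic; so $K_{\textbf{G}'}$ is an equivalence. Then, from the factorization $\bar{F} = \A_{t_{\bar{F}}}\cdot K_{\textbf{G}'}$, the two-out-of-three property gives that $\A_{t_{\bar{F}}}$ is an equivalence. Since $\A_{t_{\bar{F}}}$ commutes with the forgetful functors $U_{\textbf{G}'}$ and $U_{\textbf{G}}$, the paper concludes (invoking a standard fact) that $t_{\bar{F}}$ itself is an isomorphism of comonads.

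Your route instead builds an isomorphism $FU \cong G$ by hand from the equivalence data and then identifies it with $t_{\bar{F}}$, after which comonadicity of $F$ follows from the factorization. This is correct, and has the virtue of making the isomorphism $t_{\bar{F}}$ explicit, but it costs you the identification step you flag as delicate---checking that both maps $\bar{F}(Ua)\to F_{\textbf{G}}(a)$ are adjoint to $\mathrm{id}_{Ua}$ does work, but requires unwinding the definition of $t_{\bar{F}}$ and the compatibility of the chosen isomorphism $U\cong \bar{U}F_{\textbf{G}}$ with the units. The paper's ordering avoids this entirely: once $\A_{t_{\bar{F}}}$ is known to be an equivalence over $\A$, the conclusion that $t_{\bar{F}}$ is invertible is a one-line appeal to the correspondence between comonad morphisms and functors over the base, with no identification to verify. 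Conversely, the paper's final step leans on that standard correspondence without spelling it out, whereas your argument is self-contained once the identification is done.
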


\begin{proof} Suppose that $\bar{F}$ is an equivalence of categories.
Then $F$ is isomorphic to the comonadic functor $U_{\textbf{G}}$
and thus is comonadic. Hence the comparison functor
$K_{\textbf{G}'}: \B \to \A_{\textbf{G}'}$ is an equivalence and
it follows from the commutative diagram (4) that
$\A_{t_{\bar{F}}}$ is also an equivalence, and since the diagram

$$\xymatrix{
\A_{G'} \ar[rr]^{\A_{t_{\bar{F}}}} \ar[rd]_{U_{G'}} && \A_G \ar[dl]^{U_G}\\
& \A &}$$ is commutative, $t_{\bar{F}}$ is an isomorphism of
comonads. So $(i) \Longrightarrow (ii)$.

Suppose now that $t_{\bar{F}}: \textbf{G}\,' \to \textbf{G}$ is an
isomorphism of comonads and $F$ is comonadic. Then
\begin{itemize}
\item $K_{\textbf{G}'}$ is an equivalence, since $F$ is comonadic.
\item $\A_{t_{\bar{F}}} $ is an equivalence, since $t_{\bar{F}}$
is an isomorphism.
\end{itemize} And it now follows from the commutative diagram (4)
that $\bar{F}$ is also an equivalence. Thus $(ii) \Longrightarrow
(i)$. This completes the proof of the theorem.

\end{proof}

\begin{remark} In \cite{Go}, J. G\'{o}mez-Torrecillas has proved
that $\bar{F}$ is an equivalence of categories iff $t_{\bar{F}}$
is an isomorphism of comonads, $F$ is conservative, and for any
$(X,x)\in \A_{\textbf{G}}$, $F$ preserves the equalizer of the
pair of parallel morphisms
\begin{equation} \xymatrix@1{U(X) \ar@/^/@<+1ex>[rrr]^{U(x)}
\ar[r]_-{\eta_{U(X)}}&UG'(X) \ar[rr]_{U((t_{\bar{F}})_X)} && UG(X)
\,.}\end{equation} When $t_{\bar{F}}$ is an isomorphism of
comonads, to say that $F$ preserves the equalizer of the pair of morphisms $(5)$ is to
say that $F$ preserves the equalizer of the pair of morphisms
$$\xymatrix{ U(X) \ar@{->}@<0.5ex>[rr]^-{\eta_{U(X)}} \ar@
{->}@<-0.5ex> [rr]_-{U((t^{-1}_{\bar{F}})_X) \cdot U(x)}&&
UG'(X),}$$ which we can rewrite as \begin{equation}\xymatrix{ U(X)
\ar@{->}@<0.5ex>[rr]^-{\eta_{U(X)}} \ar@ {->}@<-0.5ex>
[rr]_-{U((t^{-1}_{\bar{F}})_X \,\cdot \,x)}&&
UG'(X)=UFU(X).}\end{equation} Since $t_{\bar{F}}$ is an
isomorphism of comonds, $\A_{t_{\bar{F}}}$ is an equivalence of
categories, and thus each object $(X,x') \in \A_{\textbf{G}'}$ is
isomorphic to the $\textbf{G}'$-coalgebra $(X,
(t^{-1}_{\bar{F}})_X \,\cdot \,x)$, where $(X,x)\in
\A_{\textbf{G}}$. It follows that when $t_{\bar{F}}$ is an
isomorphism of comonds, to say that $F$ preserves the equalizer of
$(5)$ for each $(X,x)\in \A_{\textbf{G}}$ is to say that $F$
preserves the equalizer of $(6)$ for each $(X, x') \in
\A_{\textbf{G}'}$. Thus, when $t_{\bar{F}}$ is an isomorphism of
comonds, $\bar{F}$ is an equivalence of categories iff $F$ is
conservative and preserves the equalizer of $(6)$ for each $(X,
x') \in \A_{\textbf{G}'}$, which according to (the dual of) Beck's
theorem (see \cite{M}), is to say that the functor $F$ is
comonadic. Hence our theorem 4.4 is equivalent to Theorem 1.7 of
\cite{Go}.
\end{remark}

\section{Some applications}

Let $(\mathbb{C}, \mathbb{A}, \lambda)$ be an entwining structure in a monoidal category $\V=(V, \otimes, I)$,
and let $g: I \to C$ be a group-like element of $\mathbb{C}$.
(Recall that a morphism $g: I \to C$ is said to be a group-like
element of $\mathbb{C}$ if the following diagrams
$$
\xymatrix{ I \ar[r]^{g} \ar@{=}[dr]^{(1)}& C
\ar[d]^{\varepsilon_{\mathbb{C}}}&& I \ar@{}[dr]^{(2)}\ar[r]^{g}
\ar[dr]_{g\otimes g}& C
\ar[d]^{\delta_{\mathbb{C}}}\\
& I && & C \otimes C}$$ are commutative.)

\begin{proposition} If $\mathbb{C}$ has a group-like element $g: I \to
C$, then $A$ is a right $C$-comodule through the morphism
$$\xymatrix{g_A : A  \ar[r]^-{g \otimes A}& C
\otimes A \ar[r]^{\lambda}& A \otimes C.}$$
\end{proposition}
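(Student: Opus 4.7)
The plan is to verify the two axioms for a right $C$-comodule structure, namely counitality $(A \otimes \varepsilon_{\mathbb{C}}) \cdot g_A = \mathrm{id}_A$ and coassociativity $(A \otimes \delta_{\mathbb{C}}) \cdot g_A = (g_A \otimes C) \cdot g_A$, using the entwining axioms for $\lambda'$ displayed just before the definition of an entwining structure together with the group-like conditions (1) and (2) on $g$.

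For counitality, I would compute
\[
(A \otimes \varepsilon_{\mathbb{C}}) \cdot \lambda \cdot (g \otimes A)
= (\varepsilon_{\mathbb{C}} \otimes A) \cdot (g \otimes A)
= (\varepsilon_{\mathbb{C}} \cdot g) \otimes A
= \mathrm{id}_A,
\]
where the first equality is the right-hand counit axiom for $\lambda'$ in the second displayed diagram for an entwining structure, and the last equality is the group-like condition (1).

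For coassociativity, the strategy is to expand the left-hand side using the pentagon-style axiom
\[
(A \otimes \delta_{\mathbb{C}}) \cdot \lambda = (\lambda \otimes C) \cdot (C \otimes \lambda) \cdot (\delta_{\mathbb{C}} \otimes A)
\]
and then replace $\delta_{\mathbb{C}} \cdot g$ by $g \otimes g$ via group-like condition (2), obtaining
\[
(A \otimes \delta_{\mathbb{C}}) \cdot g_A = (\lambda \otimes C) \cdot (C \otimes \lambda) \cdot (g \otimes g \otimes A).
\]
For the right-hand side, I would rewrite
\[
(g_A \otimes C) \cdot g_A = (\lambda \otimes C) \cdot (g \otimes A \otimes C) \cdot \lambda \cdot (g \otimes A)
\]
and then use the interchange law (naturality of the tensor product) to commute the morphism $g : I \to C$ past $\lambda$, getting
\[
(g \otimes A \otimes C) \cdot \lambda = (C \otimes \lambda) \cdot (g \otimes C \otimes A),
\]
and then $(g \otimes C \otimes A) \cdot (g \otimes A) = g \otimes g \otimes A$. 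Substituting back yields the same expression as on the left.

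The argument is essentially routine once the right axioms are identified, so there is no serious obstacle; the only thing to watch is bookkeeping of the tensor factors, which is why working in a strict monoidal category (as justified in Section 3) is convenient. I would present the two verifications as short chains of equalities labelled by the relevant axioms (entwining counit/coproduct compatibility, group-like conditions, and naturality of $\otimes$).
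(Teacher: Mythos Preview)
Your proposal is correct and follows essentially the same route as the paper's proof: both verify counitality via the entwining counit axiom together with the group-like condition (1), and coassociativity via the entwining comultiplication axiom $(A\otimes\delta_{\mathbb C})\lambda=(\lambda\otimes C)(C\otimes\lambda)(\delta_{\mathbb C}\otimes A)$ together with condition (2) and a naturality step. The only cosmetic difference is that the paper phrases the arguments as commutative diagrams while you write them as chains of equalities.
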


\begin{proof} Consider the diagram
$$\xymatrix{
A  \ar@{=}[dr] \ar[r]^-{g \otimes A}& C \otimes A \ar[r]^{\lambda}
\ar[d]^{\varepsilon_{\mathbb{C}} \otimes A}&
A \otimes C \ar[d]^{A \otimes \varepsilon_{\mathbb{C}}}\\
& A  \ar@{=}[r]& A\, .}$$ The triangle is commutative by (1) of
the definition of $g$ and the square is commutative by the
definition of $\lambda$ (see the second commutative diagram in the
definition of entwining structures).

Now, we have to show that the following diagram

$$\xymatrix{
A \ar[d]_{g \otimes A}  \ar[r]^{g \otimes A}& C \otimes A
\ar[r]^{\lambda}& A \otimes C \ar[dd]^-{A \otimes
\delta_{\mathbb{C}}}\\
C \otimes A \ar[d]_{\lambda}\\
A \otimes C  \ar[r]_-{g \otimes A \otimes C} & C \otimes A \otimes
C \ar[r]_{\lambda\otimes C} & A \otimes C \otimes C }$$ is also
commutative, which it is since $$(A \otimes
\delta_{\mathbb{C}})\lambda=(\lambda \otimes C)(C \otimes
\lambda)(\delta_{\mathbb{C}} \otimes A)$$ by the definition of
$\lambda$ and since the diagram (2) of definition of group-like
elements is commutative.
\end{proof}

Suppose now that $\V$ admits equalizers. For any $(M, \alpha_M)
\in \V ^{\mathbb{C}}$, write $((M, \alpha_M)^{\mathbb{C}}, i_M)$
for the equalizer of the morphisms

$$\xymatrix{(M, \alpha_M)^{\mathbb{C}} \ar[r]^-{i_M}&
M \ar@{->}@<0.5ex>[r]^-{\alpha_M} \ar@ {->}@<-0.5ex> [r]_-{Mg}& M
\otimes C.}$$

\begin{proposition} $A ^{\mathbb{C}}=(A, g_A)^{\mathbb{C}}$ is an
algebra in $\V$ and $i_A:A ^{\mathbb{C}} \to A $ is an algebra
morphism.
\end{proposition}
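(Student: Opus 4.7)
The plan is to use the universal property of the equalizer $i_A$ to induce unit and multiplication morphisms $e_{A^{\mathbb{C}}} : I \to A^{\mathbb{C}}$ and $m_{A^{\mathbb{C}}} : A^{\mathbb{C}} \otimes A^{\mathbb{C}} \to A^{\mathbb{C}}$ characterised by $i_A \cdot e_{A^{\mathbb{C}}} = e_A$ and $i_A \cdot m_{A^{\mathbb{C}}} = m_A \cdot (i_A \otimes i_A)$. These two equations are precisely the assertion that $i_A$ is a morphism of algebras, and once $m_{A^{\mathbb{C}}}$ and $e_{A^{\mathbb{C}}}$ are constructed, associativity and the unit laws on $A^{\mathbb{C}}$ follow formally from the same axioms for $\mathbb{A}$ by cancellation of the monomorphism $i_A$ (which is regular monic as an equalizer).

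For the unit, one checks that $e_A$ equalizes $g_A$ and $A \otimes g$. Since $g_A = \lambda \cdot (g \otimes A)$, the composite $g_A \cdot e_A$ equals $\lambda \cdot (C \otimes e_A) \cdot g$; the unit axiom $\lambda \cdot (C \otimes e_A) = e_A \otimes C$ in the definition of an entwining structure then rewrites this as $(e_A \otimes C) \cdot g = e_A \otimes g = (A \otimes g) \cdot e_A$, as required. Hence $e_A$ factors uniquely through $i_A$, yielding $e_{A^{\mathbb{C}}}$.

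The main step is the multiplication, where one must verify that $m_A \cdot (i_A \otimes i_A)$ equalizes $g_A$ and $A \otimes g$. Expanding $(g \otimes A) \cdot m_A = (C \otimes m_A) \cdot (g \otimes A \otimes A)$ and applying the multiplicative axiom $\lambda \cdot (C \otimes m_A) = (m_A \otimes C) \cdot (A \otimes \lambda) \cdot (\lambda \otimes A)$, one rewrites $g_A \cdot m_A$ as $(m_A \otimes C) \cdot (A \otimes \lambda) \cdot (g_A \otimes A)$. Pre-composing with $i_A \otimes i_A$ and applying the equalizer identity $g_A \cdot i_A = (A \otimes g) \cdot i_A$ on the first tensor factor collapses the outer $\lambda$; a second application of the same identity on the remaining factor eliminates the inner $\lambda$, leaving $(m_A \otimes C) \cdot (i_A \otimes i_A \otimes g)$, which is exactly $(A \otimes g) \cdot m_A \cdot (i_A \otimes i_A)$. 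This chain of rewrites — each entwining axiom used once and the equalizer identity used twice — is the main obstacle.

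The induced $m_{A^{\mathbb{C}}}$ then exists uniquely. Associativity follows by showing that both $i_A \cdot m_{A^{\mathbb{C}}} \cdot (m_{A^{\mathbb{C}}} \otimes A^{\mathbb{C}})$ and $i_A \cdot m_{A^{\mathbb{C}}} \cdot (A^{\mathbb{C}} \otimes m_{A^{\mathbb{C}}})$ equal $m_A \cdot (m_A \otimes A) \cdot (i_A \otimes i_A \otimes i_A)$, via repeated use of the defining equation and associativity of $m_A$, after which one cancels the monomorphism $i_A$; the unit laws are verified in the same spirit. By construction $i_A$ intertwines the units and multiplications and is therefore an algebra morphism.
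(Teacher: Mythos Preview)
Your argument is correct and follows essentially the same route as the paper's proof: the paper likewise induces $e_{A^{\mathbb{C}}}$ and $m_{A^{\mathbb{C}}}$ through the universal property of the equalizer $i_A$, using exactly the chain of rewrites you describe (naturality of $g\otimes-$, the multiplicative entwining axiom, and two applications of the equalizer identity), and then declares the algebra axioms and the algebra-morphism property ``straightforward'' from the resulting equations---which is precisely your cancellation-by-$i_A$ argument spelled out. Your presentation is in fact slightly more explicit than the paper's on the final verification step.
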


\begin{proof} Consider the diagram
\begin{equation}\xymatrix@1{A ^{\mathbb{C}} \ar[r]^{i_A}&
A\ar@/^/@<+1ex>[rr]^{A \otimes g}
\ar[r]_{g \otimes  A}& C \otimes A \ar[r]_{\lambda} & A \otimes C  \\
I \ar@{.>}[u]^{e_{A^{\mathbb{C}}}}\ar[ru]_{e_A}&&}\end{equation}
Since $$g \otimes -: 1_{\V}=I \otimes - \to C \otimes -$$ is a
natural transformation, the diagram
$$\xymatrix{I \ar[d]_{e_A}\ar[r]^{g} & C \ar[d]^{C \otimes  e_A}\\
A \ar[r]_{g \otimes A}& C \otimes A}$$ is commutative. Similarly,
since $e_A \otimes - : 1_{\V}=I \otimes - \to C \otimes -$ is a
natural transformation, the following diagram is also commutative:
$$\xymatrix{I \ar[r]^-{e_A}
\ar[d]_{g}& A \ar[d]^{A \otimes g}\\
C \ar[r]_-{e_A  \otimes C}& A \otimes C\,.}$$ Now we have:
$$\lambda  (g \otimes A)  e_A=\lambda  (C \otimes e_A)  g= \,\,
\text{by the definition of} \,\, \lambda$$$$=(e_A
 \otimes C) g= (A \otimes g)  e_A.$$ Thus there exists a unique morphism
$e_{A^{\mathbb{}}}: I \to A ^{\mathbb{C}}$ for which $i_{A}\cdot
e_{A ^{\mathbb{C}}}=e_A.$

Since
\begin{itemize}
\item  the diagram $$\xymatrix{A \otimes A \ar[d]_{m_A}\ar[r]^-{g
\otimes A \otimes A}
& C \otimes A \otimes A \ar[d]^{C \otimes m_A}\\
A \ar[r]_{g \otimes A}& C \otimes A}$$ is commutative by
naturality of $g \otimes -$; \item $\lambda  (C \otimes m_A)=(m_A
\otimes C) (A \otimes \lambda)  (\lambda \otimes A)$ by the
definition of $\lambda$; \item $\lambda  (g \otimes A)  i_A=(A
\otimes g)  i_A$, since $i_A$ is an equalizer of $\lambda (g
\otimes A)$ and $A \otimes g$; \item the diagram
$$\xymatrix{A \otimes A \ar[d]_{m_A}\ar[r]^-{A \otimes A \otimes g
}
&  A \otimes A \otimes C \ar[d]^{m_A \otimes C}\\
A \ar[r]_{A \otimes g}& A \otimes C}$$ is commutative by
naturality of $m_A \otimes -$,
\end{itemize} we have $$\lambda  (g \otimes A) m_A
(i_A \otimes i_A)=\lambda  (C \otimes m_A)  (g \otimes A \otimes
A)  (i_A \otimes i_A)=$$$$=(m_A \otimes C)(A \otimes
\lambda)(\lambda \otimes A)(g \otimes A \otimes A)(i_A \otimes
i_A)=$$$$=(m_A \otimes C) (A \otimes \lambda)(A \otimes g \otimes
A) (i_A \otimes i_A)=(m_A \otimes C)(A \otimes A \otimes g)(i_A
\otimes i_A)=$$$$=(A \otimes g) m_A (i_A \otimes i_A).$$ Thus the
morphism $m_A \cdot (i_A \otimes i_A)$ equalizes the morphisms
$\lambda \cdot (g \otimes A)$ and $A \otimes g$, and hence there
is a unique morphism
$$m_{A^{\mathbb{C}}}: A^{\mathbb{C}} \otimes A^{\mathbb{C}} \to
A^{\mathbb{C}}$$ such that the diagram
\begin{equation}\xymatrix{A^{\mathbb{C}}\otimes A^{\mathbb{C}}
\ar[d]_{m_{A^{\mathbb{C}}}} \ar[r]^-{i_A \otimes i_A }
&  A \otimes A \ar[d]^{m_A }\\
A^{\mathbb{C}} \ar[r]_{i_A}& A}\end{equation} commutes. It is now
straightforward to show that the triple $(A^{\mathbb{C}},
e_{A^{\mathbb{C}}}, m_{A^{\mathbb{C}}})$ is an algebra in $\V$;
moreover, the triangle of the diagram $(7)$ and the diagram $(8)$
show that $i_A$ is an algebra morphism.
\end{proof}

\begin{proposition} $(A, m_A , g_A) \in \V ^{\mathbb{C}}_{\mathbb{A}}(\lambda).$
\end{proposition}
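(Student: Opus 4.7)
The plan is to unpack what it means for $(A, m_A, g_A)$ to lie in $\V^{\mathbb{C}}_{\mathbb{A}}(\lambda)$ and then verify each requirement in turn. Recall from Section~2 that an object of $(\A^{\textbf{T}}_{\textbf{G}})(\lambda)$ is a triple consisting of a $\textbf{T}$-algebra structure, a $\textbf{G}$-coalgebra structure on the same underlying object, together with the compatibility $G(\xi_a)\cdot\lambda_a\cdot T(\nu_a)=\nu_a\cdot\xi_a$. In our monoidal setting with $\textbf{T}=\textbf{T}_{\mathbb{A}}$ and $\textbf{G}=\textbf{G}_{\mathbb{C}}$, this compatibility at $a=A$, $\xi_a=m_A$, $\nu_a=g_A$ reads
\begin{equation*}
(m_A\otimes C)\cdot(A\otimes\lambda)\cdot(g_A\otimes A)\;=\;g_A\cdot m_A.
\end{equation*}

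First, I would note that $(A,m_A)$ is trivially a right $A$-module (the free one on $I$), so $(A,m_A)\in\V_{\mathbb{A}}$. Next, Proposition~5.1 has already established that $(A,g_A)\in\V^{\mathbb{C}}$. So the only content left is the displayed compatibility relation.

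For that, I would substitute $g_A=\lambda\cdot(g\otimes A)$ into both sides. The left-hand side becomes
\begin{equation*}
(m_A\otimes C)\cdot(A\otimes\lambda)\cdot(\lambda\otimes A)\cdot(g\otimes A\otimes A),
\end{equation*}
while the right-hand side, using naturality of $g\otimes-$ to move $m_A$ past the $g$, becomes
\begin{equation*}
\lambda\cdot(C\otimes m_A)\cdot(g\otimes A\otimes A).
\end{equation*}
The two sides are then equal by the second entwining axiom displayed in Section~3, namely $\lambda\cdot(C\otimes m_A)=(m_A\otimes C)\cdot(A\otimes\lambda)\cdot(\lambda\otimes A)$, precomposed with $g\otimes A\otimes A$.

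There is no real obstacle here: the proof is a direct verification, with the only subtlety being bookkeeping of tensor factors. The whole point is that the multiplicativity axiom of the entwining $\lambda$ is \emph{exactly} the condition needed for the free module structure $m_A$ and the induced coaction $g_A=\lambda\cdot(g\otimes A)$ to be compatible, so once one writes out the two sides the identity is immediate.
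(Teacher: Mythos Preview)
Your proof is correct and follows essentially the same argument as the paper: both reduce the compatibility condition to the multiplicativity axiom of $\lambda$ precomposed with $g\otimes A\otimes A$, using naturality of $g\otimes-$ along the way. The paper presents this as a commutative diagram decomposed into two squares (one for naturality, one for the entwining axiom), whereas you write it equationally, but the content is identical.
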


\begin{proof} Since $(A,m_A) \in \V_{\mathbb{A}}$ and $(A, g_A) \in \V
^{\mathbb{C}}$, it only remains to show that the following diagram
is commutative:

\begin{equation}
\xymatrix{ A \otimes A \ar[r]^-{g_A \otimes A} \ar[d]_{m_A} & A
\otimes C \otimes A \ar[r]^{A \otimes \lambda} & A \otimes A
\otimes C
\ar[d]^{m_A \otimes C}\\
A \ar[rr]_{g_A} && A \otimes C.} \end{equation} By the definition
of $g_A$, we can rewrite it as

$$\xymatrix{
A \otimes A   \ar[d]_{m_A} \ar[r]^-{g \otimes A \otimes A}& C
\otimes A \otimes A \ar[r]^-{\lambda \otimes A} \ar@{.>}[d]^{C
\otimes m_A} & A \otimes C \otimes A \ar[r]^{A \otimes \lambda}& A
\otimes A \otimes C \ar[d]^{m_A \otimes C}\\
A \ar[r]_{g \otimes A} & C \otimes A \ar[rr]_{\lambda} && A
\otimes C.}$$ But this diagram is commutative, since

\begin{itemize}
\item the middle square commutes because of naturality of $g
\otimes -$; \item the right square commutes because of the
definition of $\lambda$.

\end{itemize}
\end{proof}

The algebra morphism $i_A:A ^{\mathbb{C}} \to A $ makes $A$ an
$A^{\mathbb{C}}$-$A^{\mathbb{C}}$-bimodule and thus induces the
\emph{extension-of-scalars} functor $$F_{i_A} : \V_{A
^{\mathbb{C}}} \to \V_{A}$$$$(X, \rho_X) \longrightarrow (X
\otimes_{A ^{\mathbb{C}}} A, X \otimes_{A ^{\mathbb{C}}} m_A),$$
and the forgetful functor
$$U_{i_A}:\V_{A} \to \V_{A ^{\mathbb{C}}}$$$$(Y, \varrho_Y)
\longrightarrow (Y, \varrho_Y \cdot (Y \otimes i_A)),$$ which is
right adjoint to $F_{i_A}$. The corresponding comonad on $\V_A$
makes $A \otimes_{A ^{\mathbb{C}}} A$ into an $A$-coring with the
following counit and comultiplication: $$ \xymatrix{\varepsilon :
A \otimes_{A^\mathbb{C}} A \ar[r]^-{q} & A \otimes A
\ar[r]^-{m_A}& A,} $$ (where $q$ is the canonical morphism) and $$
\xymatrix{ \delta : A \otimes_{A^\mathbb{C}} A =
A\otimes_{A^\mathbb{C}} {A^\mathbb{C}} \otimes_{A^\mathbb{C}} A
\ar[rr]^-{A \otimes_{A^\mathbb{C}} i_A \otimes_{A^\mathbb{C}} A}&&
A \otimes_{A^\mathbb{C}} A \otimes_{A^\mathbb{C}} A = (A
\otimes_{A^\mathbb{C}} A)_A \otimes (A \otimes_{A^\mathbb{C}}
A).}$$ We write $\underline{A \otimes_{A^\mathbb{C}} A}$ for this
$A$-coring.

\begin{lemma} For any $X \in \V_{A^\mathbb{C}}$, the triple
$$(X \otimes_{A^\mathbb{C}} A, X \otimes_{A^\mathbb{C}}
m_A, X \otimes_{A^\mathbb{C}} g_A)$$ is an object of the category
$\V ^{\mathbb{C}}_{\mathbb{A}}(\lambda).$
\end{lemma}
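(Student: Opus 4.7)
The plan is to reduce the three required properties of $(X \otimes_{A^\mathbb{C}} A, X \otimes_{A^\mathbb{C}} m_A, X \otimes_{A^\mathbb{C}} g_A)$ to the corresponding properties of $(A, m_A, g_A)$, which have already been established in the preceding proposition. Since the tensor product in $\V$ preserves coequalizers in both variables, the functor $X \otimes_{A^\mathbb{C}} -$ is colimit-preserving and comes equipped with a canonical isomorphism $X \otimes_{A^\mathbb{C}} (A \otimes Y) \cong (X \otimes_{A^\mathbb{C}} A) \otimes Y$ for every $Y \in \V$. Modulo this identification, any equation of left $A^\mathbb{C}$-linear morphisms between objects built out of $A$ descends to an analogous equation for $X \otimes_{A^\mathbb{C}} A$.

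The preliminary, and essentially only nontrivial, step is to check that $g_A : A \to A \otimes C$ is a morphism of left $A^\mathbb{C}$-modules, where $A$ carries the left action $m_A \cdot (i_A \otimes A)$ and $A \otimes C$ the analogous action on its first tensor factor. I would start from the compatibility identity $g_A \cdot m_A = (m_A \otimes C) \cdot (A \otimes \lambda) \cdot (g_A \otimes A)$ supplied by the preceding proposition, precompose both sides with $i_A \otimes A$, substitute the equalizer identity $g_A \cdot i_A = (A \otimes g) \cdot i_A$ that defines $A^\mathbb{C}$, and then reassemble $\lambda \cdot (g \otimes A)$ back into $g_A$. A short naturality argument then rewrites the result as $(m_A \otimes C) \cdot (i_A \otimes A \otimes C) \cdot (A^\mathbb{C} \otimes g_A)$, which is precisely the left $A^\mathbb{C}$-linearity of $g_A$. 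Without this, the symbol $X \otimes_{A^\mathbb{C}} g_A$ would not even be well defined.

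Once this linearity is in place, the three remaining verifications become formal: the right $A$-module axioms for $X \otimes_{A^\mathbb{C}} m_A$ are the standard ones for extension of scalars along $i_A$; the counit and coassociativity axioms for the $C$-coaction $X \otimes_{A^\mathbb{C}} g_A$ are obtained by applying $X \otimes_{A^\mathbb{C}} -$ to the counit and coassociativity identities for $g_A$, using the canonical isomorphism to identify $X \otimes_{A^\mathbb{C}} (A \otimes C)$ with $(X \otimes_{A^\mathbb{C}} A) \otimes C$ and $X \otimes_{A^\mathbb{C}} (A \otimes C \otimes C)$ with $(X \otimes_{A^\mathbb{C}} A) \otimes C \otimes C$; and the entwining compatibility is obtained in the same fashion by transporting the compatibility identity for $(A, m_A, g_A)$ along $X \otimes_{A^\mathbb{C}} -$. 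All of the real content sits in the left $A^\mathbb{C}$-linearity check; the rest is bookkeeping under the colimit-preserving functor $X \otimes_{A^\mathbb{C}} -$.
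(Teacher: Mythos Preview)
Your proof is correct and follows the same approach as the paper: reduce all three verifications to the corresponding properties of $(A, m_A, g_A)$ by transporting them along the functor $X \otimes_{A^{\mathbb{C}}} -$, invoking the compatibility diagram (9) for the entwining condition. In fact you are more careful than the paper, which simply asserts that $(X \otimes_{A^{\mathbb{C}}} A, X \otimes_{A^{\mathbb{C}}} m_A)\in \V_{\mathbb{A}}$ and $(X \otimes_{A^{\mathbb{C}}} A, X \otimes_{A^{\mathbb{C}}} g_A)\in \V^{\mathbb{C}}$ and cites (9), without pausing to check the left $A^{\mathbb{C}}$-linearity of $g_A$ that makes the symbol $X \otimes_{A^{\mathbb{C}}} g_A$ meaningful.
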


\begin{proof} Clearly $(X \otimes_{A^\mathbb{C}} A, X \otimes_{A^\mathbb{C}}
m_A) \in \V_{\mathbb{A}}$ and $((X \otimes_{A^\mathbb{C}} A, X
\otimes_{A^\mathbb{C}} g_A) \in \V ^{\mathbb{C}}$. Moreover, by
(9), the following diagram $$ \xymatrix{ X \otimes_{A^\mathbb{C}}
X \otimes_{A^\mathbb{C}} A \otimes A \ar[rr]^-{X
\otimes_{A^\mathbb{C}} g_A \otimes A} \ar[d]_{X
\otimes_{A^\mathbb{C}} m_A} && X \otimes_{A^\mathbb{C}} A \otimes
C \otimes A \ar[rr]^-{X \otimes_{A^\mathbb{C}} A \otimes \lambda}
&& X \otimes_{A^\mathbb{C}} A \otimes A \otimes C \ar[d]^{X
\otimes_{A^\mathbb{C}}m_A \otimes C}\\
X \otimes_{A^\mathbb{C}} A \ar[rrrr]_{X \otimes_{A^\mathbb{C}}
g_A} &&&& X \otimes_{A^\mathbb{C}} A \otimes C} $$ is commutative.
Thus, $(X \otimes_{A^\mathbb{C}} A, X \otimes_{A^\mathbb{C}} m_A,
X \otimes_{A^\mathbb{C}} g_A) \in \V
^{\mathbb{C}}_{\mathbb{A}}(\lambda).$
\end{proof} The lemma shows that the assignment
$$X \longrightarrow (X \otimes_{A^\mathbb{C}} A, X \otimes_{A^\mathbb{C}} m_A,
X \otimes_{A^\mathbb{C}} g_A)$$ yields a functor $$\bar{F}: \V
_{\mathbb{A}} \to \V
^{\mathbb{C}}_{\mathbb{A}}(\lambda)=\V_{\mathbb{A}}
^{(\underline{A \otimes C})_{\lambda}}.$$ It is clear that
$U_{(\underline{A \otimes C})_{\lambda}} \cdot \bar{F}=F_{i_A}$,
where $U_{(\underline{A \otimes C})_{\lambda}}:\V_{\mathbb{A}}
^{(\underline{A \otimes C})_{\lambda}} \to \V_{\mathbb{A}}$ is the
underlying functor. It now follows from Theorem 3.1 that the
composite $$\xymatrix{A \otimes_{A^\mathbb{C}} A \ar[rr]^{A
\otimes g_A} && A \otimes A \otimes C \ar[rr]^-{m_A \otimes C}&& A
\otimes C}$$ is a morphism of $A$-corings $\underline{A
\otimes_{A^\mathbb{C}} A} \to (\underline{A \otimes
C})_{\lambda}.$ We write $\emph{can}$ for this morphism. We say
that $A$ is $(\mathbb{C},g)$-Galois if $\emph{can}$ is an
isomorphism of $A$-corings.

Applying Theorem 4.4 the commutative diagram
$$\xymatrix{\V_{A^{\mathbb{C}}} \ar[r]^-{\bar{F}} \ar[dr]_{F_{i_A}=
- \otimes_{A ^{\mathbb{C}}} A}
 & {\V_{\mathbb{A}}}^{(\underline{A \otimes C})_{\lambda}}
\ar[d]^{U_{(\underline{A \otimes C})_{\lambda}}}\\
& \V_{\mathbb{A}}}$$ we get:

\begin{theorem} Let $(\mathbb{C},\mathbb{A}, \lambda)$ be an
entwining structure, and let $g: I \to C$ be a group-like element
of $\mathbb{C}$. Then the functor $$\bar{F}: \V_{A^\mathbb{C}} \to
\V ^{\mathbb{C}}_{\mathbb{A}}(\lambda)$$ is an equivalence if and
only if $A$ is $(\mathbb{C}, g)$-Galois and the functor $F$ is
comonadic.
\end{theorem}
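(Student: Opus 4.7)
The plan is to apply Theorem~4.4 directly to the adjunction $F_{i_A}\dashv U_{i_A}\colon\V_{A^{\mathbb{C}}}\rightleftarrows\V_{\mathbb{A}}$ and the lift $\bar{F}\colon\V_{A^{\mathbb{C}}}\to\V_{\mathbb{A}}^{(\underline{A\otimes C})_{\lambda}}$ constructed above. First I would verify the hypotheses: by construction of $\bar{F}$ (via the preceding lemma) one has $U_{(\underline{A\otimes C})_{\lambda}}\cdot\bar{F}=F_{i_A}$, so the commutative triangle $(2)$ of Section~4 is in place with $F=F_{i_A}$ and $\textbf{G}=\textbf{G}_{(\underline{A\otimes C})_{\lambda}}$, which is all Theorem~4.4 requires for its setup.

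Next I would identify the two ingredients on which Theorem~4.4 depends. The comonad $\textbf{G}'=F_{i_A}U_{i_A}$ generated by the adjunction is exactly the comonad on $\V_{\mathbb{A}}$ attached to the $A$-coring $\underline{A\otimes_{A^{\mathbb{C}}}A}$, so the condition that $F_{i_A}$ be comonadic in Theorem~4.4 coincides with comonadicity of $F=-\otimes_{A^{\mathbb{C}}}A$ in the usual sense. On the other hand, when the formula for $t_{\bar{F}}$ from Theorem~4.1 is evaluated at the free right $A$-module $(A,m_A)$, and one uses that the $C$-coaction which places $X\otimes_{A^{\mathbb{C}}}A$ into $\V^{\mathbb{C}}_{\mathbb{A}}(\lambda)$ is $X\otimes_{A^{\mathbb{C}}}g_A$, a direct computation produces the composite $(m_A\otimes C)\cdot(A\otimes g_A)$, which is by definition $\emph{can}$; naturality in the right $A$-module argument then forces $t_{\bar{F}}$ to agree with $\emph{can}$ on every object. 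Consequently $t_{\bar{F}}$ is an isomorphism of comonads if and only if $\emph{can}$ is an isomorphism of $A$-corings, i.e.\ if and only if $A$ is $(\mathbb{C},g)$-Galois.

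Feeding both identifications into Theorem~4.4 yields the stated equivalence at once: $\bar{F}$ is an equivalence precisely when $F_{i_A}$ is comonadic and $\emph{can}$ is an isomorphism of $A$-corings. The only delicate point is the identification of $t_{\bar{F}}$ with $\emph{can}$; this is a short diagram chase, essentially the content already indicated in the paragraph immediately preceding the theorem, where Theorem~4.1 was invoked to exhibit $\emph{can}$ as a morphism of $A$-corings. Once this identification is made, the remainder of the proof is nothing more than reading off Theorem~4.4 in the present situation.
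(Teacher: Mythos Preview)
Your proposal is correct and follows exactly the paper's own route: the paper's proof consists entirely of invoking Theorem~4.4 for the triangle $U_{(\underline{A\otimes C})_{\lambda}}\cdot\bar{F}=F_{i_A}$, having already identified $t_{\bar{F}}$ with $\emph{can}$ in the paragraph immediately preceding the statement (via Theorem~4.1, just as you describe). There is nothing to add; your write-up is if anything more explicit than the paper's one-line proof.
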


\bigskip

Let $\Al$ and $\mathbb{B}=(B, e_B , m_B)$ be algebras in $\V$ and
let $M \in {{_\mathbb{A}{\V}}_{\mathbb{B}}}$. We call $_A M$
(resp. $M_B$)
\begin{itemize}
\item \emph{flat}, if the functor $- \otimes_A \! M: {\V_A} \to
\V_B$ (resp. $M \otimes_B - : {_B \V} \to {_A \V}$) preserves
equalizers; \item \emph{faithfully flat}, if the functor $-
\otimes_A \!M : {\V_A} \to \V_B$ (resp. $M\! \otimes_B - : {_B \V} \to
{_A \V}$) is conservative and flat (equivalently, preserves and
reflects equalizers);
\end{itemize}

\begin{theorem} Let $(\mathbb{C},\mathbb{A}, \lambda)$ be an
entwining structure, and let $g: I \to C$ be a group-like element
of $\mathbb{C}$. If $C$ is flat, then the following are equivalent

\begin{itemize}
\item[\emph{(i)}] The functor $$\bar{F}: \V_{A^\mathbb{C}} \to \V
^{\mathbb{C}}_{\mathbb{A}}(\lambda)={\V_{\mathbb{A}}}^{(\underline{A
\otimes C})_{\lambda}}$$ is an equivalence of categories.
\item[\emph{(ii)}] $A$ is $(\mathbb{C}, g)$-Galois and
${_{A^{\mathbb{C}}} A}$ is faithfully flat.
\end{itemize}
\end{theorem}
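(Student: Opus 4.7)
The plan is to use Theorem 5.5 as a bridge: that theorem reduces the statement to showing that, under the additional assumption that $C$ is flat in $\V$, comonadicity of the extension-of-scalars functor $F_{i_A} = -\otimes_{A^{\mathbb{C}}} A : \V_{A^{\mathbb{C}}} \to \V_{\mathbb{A}}$ is equivalent to faithful flatness of ${}_{A^{\mathbb{C}}}A$.

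For the implication $(ii)\Rightarrow(i)$, I would assume that $A$ is $(\mathbb{C},g)$-Galois and that ${}_{A^{\mathbb{C}}}A$ is faithfully flat, so that the composite $-\otimes_{A^{\mathbb{C}}} A = U_{\mathbb{A}} \circ F_{i_A}: \V_{A^{\mathbb{C}}} \to \V$ is conservative and preserves equalizers. Since the monadic forgetful $U_{\mathbb{A}}$ is conservative and creates equalizers, it follows that $F_{i_A}$ is itself conservative and preserves equalizers. Being a left adjoint to $U_{i_A}$ and satisfying these hypotheses, $F_{i_A}$ is comonadic by (the dual of) Beck's theorem. Theorem 5.5 then yields that $\bar{F}$ is an equivalence. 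Note that this direction does not require $C$ to be flat.

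For the converse $(i)\Rightarrow(ii)$, assume $\bar{F}$ is an equivalence and that $C$ is flat. Theorem 5.5 immediately gives that $A$ is $(\mathbb{C},g)$-Galois, so the only content is faithful flatness of ${}_{A^{\mathbb{C}}}A$. To get it, I would first observe that the comonad $\bar{\textbf{G}} = \textbf{G}_{(\underline{A\otimes C})_{\lambda}}$ on $\V_{\mathbb{A}}$ acts on underlying objects as $-\otimes C$; since $C$ is flat in $\V$ and $U_{\mathbb{A}}$ creates equalizers in $\V_{\mathbb{A}}$, the comonad $\bar{\textbf{G}}$ preserves equalizers. A standard Eilenberg--Moore argument then shows that $\V^{\mathbb{C}}_{\mathbb{A}}(\lambda) = (\V_{\mathbb{A}})_{\bar{\textbf{G}}}$ has equalizers, created (and so in particular preserved) by the forgetful functor $U_{(\underline{A \otimes C})_{\lambda}}$, which is moreover always conservative. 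Since $F_{i_A} = U_{(\underline{A\otimes C})_{\lambda}} \circ \bar{F}$ with $\bar{F}$ an equivalence, $F_{i_A}$ is conservative and preserves equalizers; composing with $U_{\mathbb{A}}$ gives that $-\otimes_{A^{\mathbb{C}}} A$ shares both properties, i.e., ${}_{A^{\mathbb{C}}}A$ is faithfully flat.

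The decisive step is the observation that flatness of $C$ forces $\bar{\textbf{G}}$ to preserve equalizers, which is exactly what is needed to upgrade the abstract comonadicity supplied by Theorem 5.5 (preservation of $F_{i_A}$-split equalizers only) into preservation of all equalizers, equivalently faithful flatness of ${}_{A^{\mathbb{C}}}A$. Without the hypothesis on $C$, comonadicity of $F_{i_A}$ is strictly weaker than faithful flatness, so the flatness assumption cannot be removed in the present formulation.
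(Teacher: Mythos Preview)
Your proof is correct and follows the same strategy as the paper: reduce to Theorem~5.5, then use flatness of $C$ to deduce that the comonad $\bar{\textbf{G}}$ (and hence the forgetful functor $U_{(\underline{A\otimes C})_\lambda}$) preserves equalizers, which upgrades comonadicity of $F_{i_A}$ to faithful flatness of ${}_{A^{\mathbb{C}}}A$. One small remark: by the paper's definition, ${}_{A^{\mathbb{C}}}A$ being (faithfully) flat refers directly to the functor $F_{i_A}=-\otimes_{A^{\mathbb{C}}}A:\V_{A^{\mathbb{C}}}\to\V_{A}$, not to its composite with $U_{\mathbb{A}}$, so your detour through $U_{\mathbb{A}}$ is unnecessary (though harmless, since $U_{\mathbb{A}}$ is conservative and creates equalizers).
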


\begin{proof}Since any left adjoint functor that is conservative and
preserves equalizers is comonadic by a simple and
well-known application (of the dual of) Beck's theorem, one
direction is clear from Therem 5.5; so suppose that $\bar{F}$ is
an equivalence of categories. Then, by Theorem 4.5, $A$ is
$(\mathbb{C}, g)$-Galois and the functor $F_{i_A}$ is comonadic.
Since any comonadic functor is conservative, $F_{i_A}$ is also
conservative. Thus, it only remains to show that
${_{A^{\mathbb{C}}} A}$ is flat.

Since $C$ is flat by our assumption, ${_A (A \otimes C)}$ is also
flat. It follows that the underlying functor of the comonad
$\textbf{G}_{(\underline{A \otimes C})_{\lambda}}$ on $\V _A$
preserves equalizers. We recall (for example, from \cite{B}) that
if $\textbf{G}=(G, \varepsilon_G, \delta_G)$ is a comonad on a
category $\mathcal{A}$, and if $\mathcal{A}$ has some type of
limits preserved by $G$, then the category
$\mathcal{A}_{\textbf{G}}$ has the same type of limits and these
are preserved by the underlying functor
$U_{\textbf{G}}:\mathcal{A}_{\textbf{G}} \to \mathcal{A}$. Thus
the functor $U_{(\underline{A \otimes
C})_{\lambda}}:{\V_{\mathbb{A}}}^{(\underline{A \otimes
C})_{\lambda}}\to \V_{\mathbb{A}}$ preserves equalizers, and since
$\bar{F}$ is an equivalence of categories, the functor $F_{i_A}= -
\otimes_{A ^{\mathbb{C}}} A$ also preserves equalizers, which just
means that ${_{A^{\mathbb{C}}} A}$ is flat. This completes the
proof.
\end{proof}

\bigskip
From now on we suppose at all times that our $\V$ is a strict
braided monoidal category with braiding $\sigma_{X,Y} : X \otimes
Y \to Y \otimes X$. Then the tensor product of two (co)algebras in
$\V$ is again a (co)algebra; the multiplication $m_{A \otimes B}$
and the unit $e_{A \otimes B}$ of the tensor product of two
algebras $\Al$ and $\mathbb{B}=(B, e_\mathbb{B}, m_\mathbb{B})$
are given through
$$m_{A \otimes B}=(m_{A} \otimes
m_B)(A \otimes \sigma_{A,B}\otimes B)$$ and
$$e_{A \otimes B}=e_A \otimes e_B.$$

A bialgebra $\mathbb{H}=(\bar{H}=(H, e_H, m_H),\underline{H}=(H,
\varepsilon_H, \delta_H))$ in $\V$ is an algebra $\bar{H}=(H, e_H,
m_H)$ and a coalgebra $\underline{H}=(H, \varepsilon_H,
\delta_H)$, where $\varepsilon_H$ and $\delta_H$ are algebra
morphisms, or, equivalently, $e_H$ and $m_H$ are coalgebra
morphisms.

A Hopf algebra $\mathbb{H}=(\bar{H}=(H, e_H,
m_H),\underline{H}=(H, \varepsilon_H, \delta_H),S)$ in $\V$ is a
bialgebra $\mathbb{H}$ with a morphism $S : H \to H$, called the
antipode of $\mathbb{H}$, such that $$m_H (H \otimes
S)\delta_H=m_H (S \otimes H)\delta_H.$$

Recall that for any bialgebra $\mathbb{H}$, the category $\V
^{\underline{H}}$ is monoidal: The tensor product $(X,
\delta_X)\otimes (Y, \delta_Y)$ of two right
$\mathbb{H}$-comodules $(X, \delta_X)$ and $ (Y, \delta_Y)$ is
their tensor product $X \otimes Y$ in $\V$ with the coaction
$$\xymatrix{\delta_{X \otimes Y}: X \otimes Y \ar[r]^-{\delta_X \otimes
\delta_Y}& X \otimes H \otimes Y \otimes H \ar[rr]^-{X \otimes \sigma_{X,Y}
\otimes Y}&& X \otimes Y \otimes H \otimes H \ar[rr]^-{X \otimes Y
\otimes m_H} && X \otimes Y \otimes H}.$$ The unit object for this
tensor product is $I$ with trivial $\underline{H}$-comodule
structure $e_H : I \to H.$

\begin{proposition} Let $\mathbb{H}=(\bar{H}=(H, e_H,
m_H),\underline{H}=(H, \varepsilon_H, \delta_H))$ be a bialgebra
in $\V$. For any algebra $\mathbb{A}=(A, e_A, m_A)$ in $\V$, the
following conditions are equivalent: \begin{itemize} \item
$\mathbb{A}=(A, e_A, m_A)$ is an algebra in the monoidal category
$\V ^{{\underline{H}}}$; \item $\mathbb{A}=(A, e_A, m_A)$ is an
$H$-comodule algebra; that is, $A$ is a right $H$-comodule and the
$H$-comodule coaction $\alpha_A : A \to A \otimes H$ is a morphism
of algebras in $\V$ from the algebra $\Al$ to the algebra $A
\otimes \bar{H}=(A \otimes \bar{H}, e_A \otimes e_{H}, m_{A
\otimes \bar{H}})$.

\end{itemize}
\end{proposition}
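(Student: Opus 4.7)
The plan is to deduce the equivalence by unfolding definitions. First I would observe that an algebra in $\V^{\underline{H}}$ consists of an object $(A,\alpha_A)\in\V^{\underline{H}}$ together with morphisms $e_A:I\to A$ and $m_A:A\otimes A\to A$ of $\V^{\underline{H}}$ satisfying the usual associativity and unit identities. Since the forgetful functor $\V^{\underline{H}}\to\V$ is faithful and commutes with tensor products, these identities coincide with the algebra axioms for $\mathbb{A}$ in $\V$. Hence the extra content beyond $\mathbb{A}$ being an algebra in $\V$ is exactly that the morphisms $e_A$ and $m_A$ are $H$-colinear. The task reduces to translating these two colinearity conditions into the two conditions for $\alpha_A$ to be a morphism of algebras $\mathbb{A}\to A\otimes\bar{H}$.

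Next I would handle the unit. Since the unit object in $\V^{\underline{H}}$ is $I$ equipped with coaction $e_H:I\to H$, the $H$-colinearity of $e_A$ amounts to $\alpha_A\circ e_A=(e_A\otimes H)\circ e_H=e_A\otimes e_H$, which is precisely the statement that $\alpha_A$ respects units, since $e_A\otimes e_H$ is by definition the unit of $A\otimes\bar{H}$.

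Then I would handle the multiplication. Here I would unfold the definition of the coaction $\delta_{A\otimes A}$ from the monoidal structure of $\V^{\underline{H}}$ recalled just before the statement: it is the composite of $\alpha_A\otimes\alpha_A$, the braiding in the middle two factors, and $A\otimes A\otimes m_H$. The $H$-colinearity condition for $m_A$ then reads
\[
\alpha_A\circ m_A=(m_A\otimes H)\circ\delta_{A\otimes A},
\]
and inspection shows that the right-hand side coincides, after grouping the $m_A$ with the remaining $m_H$, with $m_{A\otimes\bar{H}}\circ(\alpha_A\otimes\alpha_A)$ by the very definition of the multiplication of the tensor product of two algebras in a braided monoidal category. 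Therefore $m_A$ is colinear iff $\alpha_A$ preserves multiplications. Combining the two cases yields the equivalence. I do not expect any serious obstacle; the whole argument is a routine unwinding of definitions, the only care being to recognise that the braiding appearing in the coaction on $A\otimes A$ is literally the same as the one appearing in the multiplication on $A\otimes\bar{H}$, so that the diagrams arising from the two sides of the equivalence match on the nose.
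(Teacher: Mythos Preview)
Your argument is correct: it is the standard unwinding showing that the $H$-colinearity of $e_A$ and $m_A$ is equivalent to $\alpha_A$ preserving the unit and the multiplication, respectively, with the key observation that the single braiding occurring in the diagonal coaction $\delta_{A\otimes A}$ is the same one occurring in $m_{A\otimes \bar H}$. The paper itself gives no proof of this proposition; it is stated and then immediately used, so there is nothing to compare against beyond noting that your write-up supplies exactly the routine verification the author left to the reader.
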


Suppose now that $\mathbb{A}=(A, e_A, m_A)$ is a right
$H$-comodule algebra with $H$-coaction $\alpha_A : A \to A \otimes
H$. By the previous proposition, $A$ is an algebra in the monoidal
category $\V ^{{\underline{H}}}$, and thus defines a monad
$\textbf{T}^{A}_H=(T^A_H, \eta^A_H, \mu^A_H)$ on $\V
^{{\underline{H}}}$ as follows: \begin{itemize} \item $T^A_H(X,
\delta_X)=(X, \delta_X) \otimes (A,\alpha_A)$; \item
${(\eta^A_H)}_{(X, \delta_X)}=X \otimes e_A;$ \item
${(\mu^A_H)}_{(X, \delta_X)}=X \otimes m_A$.
\end{itemize} It is easy to see that the monad $\textbf{T}^{A}_H$
extends the monad $\textbf{T}^{A}$; and it follows from Theorem
2.1 that there exists a distributive law $\lambda_{\alpha}:
\textbf{T}^{\mathbb{A}}\cdot \textbf{G}_{\underline{H}} \to
\textbf{G}_{{\underline{H}}}\cdot \textbf{T}^{\mathbb{A}}$ from
the monad $\textbf{T}^{\mathbb{A}}$ to the comonad
$\textbf{G}_{\underline{H}}$, and hence an entwining structure
$(\underline{H}, \mathbb{A}, \lambda_{(A, \alpha_A)})$, where
$\lambda_{(A, \alpha_A)}=(\lambda_{\alpha})_I$.

Therefore we have:

\begin{theorem}Every right $\mathbb{H}$-comodule algebra $\mathbb{A}=
((A, \alpha_A), m_A, e_A)$ defines an entwining structure
$(\underline{H}, \mathbb{A}, \lambda_{(A, \alpha_A)}: C \otimes A
\to A \otimes C)$.
\end{theorem}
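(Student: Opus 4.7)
The plan is to follow the strategy sketched in the paragraph immediately preceding the theorem and apply the monad part of Theorem 2.1 to the monad $\textbf{T}^{A}_H$ on $\V^{\underline{H}}$. By Proposition 5.9, the right $\mathbb{H}$-comodule algebra structure on $\mathbb{A}$ is precisely the data making $(A,e_A,m_A)$ an algebra object in the braided monoidal category $\V^{\underline{H}}=\V_{\textbf{G}_{\underline{H}}}$, and tensoring on the right with $(A,\alpha_A)$ produces the monad $\textbf{T}^{A}_H$ exactly as specified.

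The first concrete step is to verify that $\textbf{T}^{A}_H$ extends $\textbf{T}^{\mathbb{A}}$ along the forgetful functor $U_{\textbf{G}_{\underline{H}}}\colon \V^{\underline{H}}\to \V$ in the sense of the third bullet of Theorem 2.1. The underlying $\V$-object of $T^{A}_H(X,\delta_X)=(X,\delta_X)\otimes(A,\alpha_A)$ is $X\otimes A=T^{\mathbb{A}}(X)$, and the unit and multiplication of $\textbf{T}^{A}_H$ are $X\otimes e_A$ and $X\otimes m_A$ respectively, so the three extension equalities $U_{\textbf{G}_{\underline{H}}}T^{A}_H=T^{\mathbb{A}}U_{\textbf{G}_{\underline{H}}}$, $U_{\textbf{G}_{\underline{H}}}\eta^{A}_H=\eta^{\mathbb{A}}U_{\textbf{G}_{\underline{H}}}$, and $U_{\textbf{G}_{\underline{H}}}\mu^{A}_H=\mu^{\mathbb{A}}U_{\textbf{G}_{\underline{H}}}$ hold by inspection. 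Theorem 2.1 then furnishes a mixed distributive law $\lambda_\alpha\colon \textbf{T}^{\mathbb{A}}\textbf{G}_{\underline{H}}\to \textbf{G}_{\underline{H}}\textbf{T}^{\mathbb{A}}$.

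Setting $\lambda_{(A,\alpha_A)}:=(\lambda_\alpha)_I\colon H\otimes A\to A\otimes H$ and invoking the observation immediately preceding Definition~3.1, this single component $(\lambda_\alpha)_I$ automatically satisfies the four commutative diagrams characterising entwining structures; consequently $-\otimes \lambda_{(A,\alpha_A)}$ is itself a mixed distributive law from $\textbf{T}^{\mathbb{A}}$ to $\textbf{G}_{\underline{H}}$, and $(\underline{H},\mathbb{A},\lambda_{(A,\alpha_A)})$ is the desired entwining structure. The only mildly technical point, not strictly needed for the existence statement but useful for applications, is to unpack the Theorem~2.1 formula for $(\lambda_\alpha)_I$: using the definition of the tensor product $(H,\delta_H)\otimes (A,\alpha_A)$ in the braided monoidal category $\V^{\underline{H}}$ together with the counit axiom for $H$, one obtains the explicit description
\[
\lambda_{(A,\alpha_A)} = (A\otimes m_H)\cdot(\sigma_{H,A}\otimes H)\cdot(H\otimes \alpha_A).
\]
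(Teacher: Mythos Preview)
Your argument is correct and follows essentially the same route as the paper: the theorem is stated as an immediate consequence of the preceding paragraph, where the paper observes (via Proposition~5.7) that $\mathbb{A}$ is an algebra in $\V^{\underline{H}}$, that the induced monad $\textbf{T}^{A}_H$ extends $\textbf{T}^{\mathbb{A}}$, and then applies Theorem~2.1 to obtain the mixed distributive law $\lambda_\alpha$ with entwining map $\lambda_{(A,\alpha_A)}=(\lambda_\alpha)_I$. Your additional remark giving the explicit formula $(A\otimes m_H)(\sigma_{H,A}\otimes H)(H\otimes\alpha_A)$ is precisely the content of the paper's next result, Proposition~5.9, which is proved there by the same unpacking of the Theorem~2.1 formula you sketch.
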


\begin{proposition}Let $\mathbb{A}=
((A, \alpha_A), m_A, e_A)$ be a right  $\mathbb{H}$-comodule
algebra. Then the entwining structure $\lambda_{A, \alpha_A}: H
\otimes A \to A \otimes H$ is given by the composite: $$\xymatrix{
H \otimes A \ar[r]^-{H \otimes \alpha_A} & H \otimes A \otimes H
\ar[r]^-{\sigma_{H,A}\otimes H} & A \otimes H \otimes H \ar[r]^-{A
\otimes m_H}& A \otimes H}.$$
\end{proposition}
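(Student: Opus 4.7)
The plan is to apply the explicit recipe of Theorem~2.1 (third bullet) that reconstructs a mixed distributive law from a monad extension, specialized to the monad $\textbf{T}^{\mathbb{A}}$ on $\V$, its extension $\textbf{T}^{A}_{H}$ to $\V^{\underline{H}}$, and the comonad $\textbf{G}_{\underline{H}}=-\otimes H$. Writing $U\dashv F$ for the forgetful--cofree adjunction, with unit $\eta$ (the coaction) and counit $\varepsilon$ (application of $\varepsilon_{H}$), the entwining $\lambda_{(A,\alpha_{A})}$ is by construction the $I$-component of
\[
TG\;=\;U\bar{T}F\xrightarrow{\,U\eta\bar{T}F\,}UFU\bar{T}F\;=\;GTG\xrightarrow{\,GT\varepsilon\,}GT.
\]

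First I would unpack $\bar{T}^{A}_{H}(F(X))=(X\otimes H,X\otimes\delta_{H})\otimes(A,\alpha_{A})$ using the monoidal structure on $\V^{\underline{H}}$: its underlying object is $X\otimes H\otimes A$ and its coaction is the composite
\[
\begin{aligned}
X\otimes H\otimes A &\xrightarrow{(X\otimes\delta_{H})\otimes\alpha_{A}} X\otimes H\otimes H\otimes A\otimes H\\
&\xrightarrow{X\otimes H\otimes\sigma_{H,A}\otimes H} X\otimes H\otimes A\otimes H\otimes H\\
&\xrightarrow{X\otimes H\otimes A\otimes m_{H}} X\otimes H\otimes A\otimes H.
\end{aligned}
\]
The component $(U\eta\bar{T}F)_{X}$ is precisely this coaction, while $(GT\varepsilon)_{X}=X\otimes\varepsilon_{H}\otimes A\otimes H$ collapses the left $H$-factor by $\varepsilon_{H}$. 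Setting $X=I$ and concatenating gives a four-arrow expression for $\lambda_{(A,\alpha_{A})}$.

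The final step is simplification. Through the braiding $\sigma_{H,A}$ and the multiplication $m_{H}$ acting on the two right-hand copies of $H$, the leftmost $H$-factor is carried along untouched, so by naturality of $\otimes$ I can slide $\varepsilon_{H}\otimes-$ leftward to act immediately after $\delta_{H}\otimes\alpha_{A}$. The counit axiom $(\varepsilon_{H}\otimes H)\cdot\delta_{H}=1_{H}$ of the coalgebra $\underline{H}$ then collapses the first two arrows into $H\otimes\alpha_{A}$, leaving exactly the three-arrow composite
\[
H\otimes A\xrightarrow{H\otimes\alpha_{A}}H\otimes A\otimes H\xrightarrow{\sigma_{H,A}\otimes H}A\otimes H\otimes H\xrightarrow{A\otimes m_{H}}A\otimes H
\]
asserted in the statement.

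The only real obstacle is bookkeeping: three copies of $H$ appear in the intermediate formula (the comonad copy from $G$, together with the two fresh copies produced by $\delta_{H}$ and $\alpha_{A}$), and they must be tracked carefully through the braiding and the multiplication in order to justify the naturality commutation that puts $\varepsilon_{H}$ adjacent to $\delta_{H}$; once this is in order, the proof reduces to one application of the counit axiom for $\underline{H}$.
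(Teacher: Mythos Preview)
Your proposal is correct and follows essentially the same route as the paper: both invoke the third bullet of Theorem~2.1 to express $\lambda_{(A,\alpha_A)}$ as the coaction on $\bar{T}F_G(I)=(H,\delta_H)\otimes(A,\alpha_A)$ followed by $\varepsilon_H\otimes A\otimes H$, and then simplify by sliding $\varepsilon_H$ past $\sigma_{H,A}$ and $m_H$ (the paper's two commuting squares) to meet $\delta_H$ and cancel via the counit axiom (the paper's commuting triangle). The paper works directly at $X=I$ while you carry a general $X$ before specializing, but otherwise the arguments are identical.
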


\begin{proof} Since $(A \otimes \alpha_A)\,\, , (H, \delta_H) \in
\V^{\underline{H}},$ the pair $(A \otimes H, \delta_{A \otimes
H})$, where $\delta_{A \otimes H}$ is the composite $$\xymatrix{H
\otimes A \ar[r]^-{\delta_H \otimes \alpha_A}& H \otimes H \otimes
H \otimes A \ar[rr]^-{H \otimes \sigma_{H,A}\otimes H}&& H \otimes
A \otimes H \otimes H \ar[rr]^-{H \otimes A \otimes m_H} && H
\otimes A \otimes H\, ,}$$ is also an object of
$\V^{\underline{H}}$, and it follows from Theorem 1.1 that
$\lambda_{(A, \alpha_A)}$ is the composite $$\xymatrix{H \otimes A
\ar[r]^-{\delta_{A \otimes H}}& H \otimes A \otimes H
\ar[rr]^-{\varepsilon_H \otimes A \otimes H} && A \otimes H .}
$$ Consider now the following diagram

$$\xymatrix{
 H \otimes A \otimes H
\ar[rr]^-{\delta_H \otimes A \otimes H} \ar@{=}[ddrr]&& {H \otimes
H} {\otimes A  \otimes H} \ar[rr]^-{H \otimes \sigma_{H,A} \otimes
H} \ar[dd]_{\varepsilon_H \otimes H \otimes A \otimes H}&& H
\otimes A \otimes H \otimes H \ar[rr]^-{H \otimes A \otimes m_H}
\ar[dd]_{\varepsilon_H \otimes A \otimes H \otimes H} &&H \otimes
A
\otimes H \ar[dd]_{\varepsilon_H \otimes A \otimes H}\\\\
H \otimes A \ar[uu]^-{H \otimes \alpha_A}&& H \otimes A \otimes H
\ar[rr]_-{\sigma_{H,A}\otimes H} && A \otimes H \otimes H
\ar[rr]_{A \otimes m_H} && A \otimes H \,.}$$ Since in this
diagram
\begin{itemize}
\item the triangle commutes because $\varepsilon_H$ is the counit
for $\delta_H$; \item the left square commutes by naturality of
$\sigma$; \item the right square commutes because $- \otimes -$ is
a bifunctor,
\end{itemize} it follows that $$\lambda_{(A, \alpha_A)}=
(A \otimes m_H)(\sigma_{H,A} \otimes H)(H \otimes \alpha_A).$$
\end{proof}

Note that the morphism $e_H : I \to H$ is a group-like element for
the coalgebra $\underline{H}=(H, \varepsilon_H , \delta_H)$.

\begin{proposition} Let $\mathbb{H}=(\bar{H}=(H, e_H,
m_H),\underline{H}=(H, \varepsilon_H, \delta_H))$ be a bialgebra
in $\V$, and let $\mathbb{A}=((A,\alpha_A), e_A , m_A)$ be a right
$\mathbb{H}$-comodule algebra. Then the right
$\underline{H}$-comodule structure on $A$ corresponding to the group-like
element $e_H :I\to H$ as in Proposition 4.1 coincides with
$\alpha_A$.
\end{proposition}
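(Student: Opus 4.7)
The plan is to compute $g_{A}$ directly using the explicit expression for $\lambda_{(A,\alpha_{A})}$ provided by the preceding proposition and show that the composite collapses to $\alpha_{A}$. By definition, $g_{A}=\lambda_{(A,\alpha_{A})}\cdot (e_{H}\otimes A)$, and since
$$\lambda_{(A,\alpha_{A})}=(A\otimes m_{H})(\sigma_{H,A}\otimes H)(H\otimes \alpha_{A}),$$
the task reduces to simplifying the four-fold composite
$$(A\otimes m_{H})\,(\sigma_{H,A}\otimes H)\,(H\otimes \alpha_{A})\,(e_{H}\otimes A).$$

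First, I would use bifunctoriality of $\otimes$ to rewrite $(H\otimes \alpha_{A})(e_{H}\otimes A)$ as $(e_{H}\otimes A\otimes H)\cdot \alpha_{A}$, thereby pulling the morphism $\alpha_{A}$ to the right of the composite. Next, naturality of the braiding $\sigma$ applied to $e_{H}:I\to H$ yields the identity $\sigma_{H,A}\cdot (e_{H}\otimes A)=(A\otimes e_{H})\cdot \sigma_{I,A}$; since $\V$ is strict and $I$ is the unit, $\sigma_{I,A}$ is the identity, so this step converts the braiding into the plain tensor $A\otimes e_{H}$. Substituting back, the composite becomes
$$(A\otimes m_{H})\,(A\otimes e_{H}\otimes H)\,\alpha_{A}=\bigl(A\otimes \bigl(m_{H}(e_{H}\otimes H)\bigr)\bigr)\cdot \alpha_{A}.$$

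Finally, the unit axiom for the algebra $\bar{H}$ gives $m_{H}(e_{H}\otimes H)=\mathrm{id}_{H}$, so the expression reduces to $(A\otimes H)\cdot \alpha_{A}=\alpha_{A}$, which is exactly what we need. The only potential subtlety is keeping track of the unit isomorphisms $I\otimes A\cong A\cong A\otimes I$, but because we are working in a strict monoidal setting these are identities and the argument is essentially a sequence of diagram substitutions; no intricate step is involved, and the naturality of $\sigma$ combined with the unit axiom does all the work.
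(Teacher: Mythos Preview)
Your proposal is correct and follows essentially the same route as the paper's proof: both compute $g_A=\lambda_{(A,\alpha_A)}(e_H\otimes A)$ by first using bifunctoriality to pull $\alpha_A$ to the right, then naturality of $\sigma$ to turn $e_H\otimes A$ into $A\otimes e_H$, and finally the unit axiom $m_H(e_H\otimes H)=\mathrm{id}_H$ to collapse the remaining composite. The only cosmetic difference is that you make explicit the identification $\sigma_{I,A}=\mathrm{id}$ in the strict setting, which the paper leaves implicit.
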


\begin{proof} We have to show that $$(A \otimes m_H)(\sigma_{H,A}
\otimes H)(H \otimes \alpha_A)(e_H \otimes A)=\alpha_A.$$ But
since
\begin{itemize}
\item clearly $(H \otimes \alpha_A)(e_H \otimes A)=(e_H \otimes A
\otimes H) \cdot \alpha_A;$ \item $(\sigma_{H,A} \otimes H) \cdot
(e_H \otimes A \otimes H)=A \otimes e_H \otimes H \,\,\text{by
naturality of}$\,\, $\sigma;$ \item $(A \otimes m_H ) \cdot (A
\otimes e_H \otimes H)=1_{A \otimes H} \,\,\text{since}\,\, e_H
\,\,\text{is the identity for}$\,\, $m_H$,
\end{itemize} we have that $$(A \otimes m_H)(\sigma_{H,A}
\otimes H)(H \otimes \alpha_A)(e_H \otimes A)=$$$$=(A \otimes
m_H)(\sigma_{H,A} \otimes H)(e_H \otimes A \otimes
H)\alpha_A=$$$$=(A \otimes m_H)(A \otimes e_H \otimes
H)\alpha_A=$$$$=1_{A \otimes H}\cdot \alpha_A=\alpha_A.$$
\end{proof}
It now follows from Proposition 5.3 that

\begin{proposition} $\mathbb{A}=(A, e_A, m_A) \in \V_{\mathbb{A}}^
{\underline{H}}(\lambda_{A, \alpha_A}).$
\end{proposition}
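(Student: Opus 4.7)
The proof is essentially an invocation of two previously established results. The plan is to apply Proposition 5.3 (which says that for any entwining structure $(\mathbb{C},\mathbb{A},\lambda)$ with a group-like element $g: I \to C$, the triple $(A, m_A, g_A)$ is an object of $\V^{\mathbb{C}}_{\mathbb{A}}(\lambda)$) to the specific entwining structure produced in Theorem 5.8, together with the canonical group-like element $e_H : I \to H$ of $\underline{H}$.

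First I would note that by Theorem 5.8, the right $\mathbb{H}$-comodule algebra $\mathbb{A}=((A,\alpha_A), e_A, m_A)$ gives rise to an entwining structure $(\underline{H}, \mathbb{A}, \lambda_{(A,\alpha_A)})$, whose explicit form is the composite $(A \otimes m_H)(\sigma_{H,A}\otimes H)(H \otimes \alpha_A)$ identified in Proposition 5.9. Next I would invoke the observation (made just before the statement) that $e_H : I \to H$ is a group-like element of the coalgebra $\underline{H}=(H,\varepsilon_H,\delta_H)$; this is immediate from the counit and coassociativity axioms for a bialgebra.

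Applying Proposition 5.3 to the data $(\underline{H}, \mathbb{A}, \lambda_{(A,\alpha_A)})$ with group-like element $e_H$ then yields that the triple $(A, m_A, (e_H)_A)$ lies in $\V^{\underline{H}}_{\mathbb{A}}(\lambda_{(A,\alpha_A)})$, where by definition $(e_H)_A = \lambda_{(A,\alpha_A)} \cdot (e_H \otimes A)$. The final step is to identify $(e_H)_A$ with $\alpha_A$: this is precisely the content of Proposition 5.10. Substituting this identification gives $(A, m_A, \alpha_A) \in \V^{\underline{H}}_{\mathbb{A}}(\lambda_{(A,\alpha_A)})$, which is the desired conclusion.

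There is essentially no obstacle here, since both pieces of input (Propositions 5.3 and 5.10) have already been proved; the only minor task is to state that everything combines correctly, i.e.\ that the entwining-module compatibility relation $G(m_A)\cdot \lambda_{(A,\alpha_A)} \cdot T(\alpha_A)=\alpha_A \cdot m_A$ required in the definition of $\V^{\underline{H}}_{\mathbb{A}}(\lambda_{(A,\alpha_A)})$ is already verified by Proposition 5.3 once we know $(e_H)_A=\alpha_A$. Hence the proof reduces to a single line citing these two results.
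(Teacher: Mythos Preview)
Your proposal is correct and follows exactly the paper's approach: the paper simply notes that $e_H$ is a group-like element, proves Proposition 5.10 identifying $(e_H)_A$ with $\alpha_A$, and then states that Proposition 5.11 ``now follows from Proposition 5.3.'' Your write-up is in fact more explicit than the paper's one-line justification, but the logical structure is identical.
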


Recall that for any $(X, \alpha_X)\in \V ^{\underline{H}}$, the
algebra $X^{\underline{H}}=(X, \alpha_X)^{\underline{H}}$ is the
equalizer of the morphisms

$$\xymatrix{
X \ar@{->}@<0.5ex>[r]^-{\alpha_X} \ar@ {->}@<-0.5ex> [r]_-{X
\otimes e_H}& X \otimes H.}$$

Applying Theorem 5.5 we get

\begin{theorem}Let $\mathbb{H}=(\bar{H}=(H, e_H,
m_H),\underline{H}=(H, \varepsilon_H, \delta_H))$ be a bialgebra
in $\V$, let $\mathbb{A}=((A,\alpha_A), e_A , m_A)$ be a right
$\mathbb{H}$-comodule algebra, and let $\lambda_{(A, \alpha_A)}:H
\otimes A \to A \otimes H$ be the corresponding entwining
structure. Then the functor $$\bar{F}: \V_{A^{\underline{H}}} \to
\V_{\mathbb{A}}^{\underline{H}}(\lambda_{(A, \alpha_A)})$$
$$(X, \nu_X)\longrightarrow
(X \otimes_{\mathbb{A}^{\underline{H}}} A, X
\otimes_{\mathbb{A}^{\underline{H}}} m_A, X
\otimes_{\mathbb{A}^{\underline{H}}} \alpha_A)$$ is an equivalence
of categories iff the extension-of-scalars functor
$$F_{i_A}: \V_{A^{\underline{H}}} \to \V_{A}$$
$$(X, \nu_X)\longrightarrow (X \otimes_{\mathbb{A}^{\underline{H}}}
A, X \otimes_{\mathbb{A}^{\underline{H}}} m_A)$$ is comonadic and
$A$ is $\underline{H}$-Galois (in the sense that the canonical
morphism
$$\emph{can}:A \otimes_{A^{\underline{H}}} A \to A \otimes
H$$ is an isomorphism).
\end{theorem}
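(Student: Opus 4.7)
The plan is to deduce this theorem as an immediate specialisation of the earlier Galois-theoretic characterisation (Theorem 5.5) applied to the entwining structure $(\underline{H}, \mathbb{A}, \lambda_{(A,\alpha_A)})$ constructed from the comodule algebra $\mathbb{A}$, with the group-like element $g := e_H : I \to H$. Everything in the statement should match the data appearing in Theorem 5.5 under this choice, so the proof reduces to a careful identification of the relevant invariants, corings, and canonical maps.

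First I would check that $e_H$ is indeed a group-like element of the coalgebra $\underline{H}$, which is immediate from the bialgebra axioms: $\varepsilon_H$ and $\delta_H$ being algebra morphisms gives precisely $\varepsilon_H \cdot e_H = 1_I$ and $\delta_H \cdot e_H = e_H \otimes e_H$, the two defining diagrams of a group-like element. Next I would invoke Proposition 5.11, which identifies the right $\underline{H}$-comodule structure $g_A = \lambda_{(A,\alpha_A)} \cdot (e_H \otimes A)$ produced on $A$ by the earlier proposition with the given coaction $\alpha_A$. This identification has two direct consequences: the invariants subalgebra $(A, g_A)^{\underline{H}}$ of Theorem 5.5 coincides with the subalgebra $A^{\underline{H}}$ of the statement (equalizer of $\alpha_A$ and $A \otimes e_H$), and by Theorem 3.3 the $A$-coring $(\underline{A \otimes H})_{\lambda_{(A,\alpha_A)}}$ is the one whose category of comodules is $\V_{\mathbb{A}}^{\underline{H}}(\lambda_{(A,\alpha_A)})$. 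Unwinding the canonical $A$-coring morphism constructed before Theorem 5.5 gives $(m_A \otimes H) \cdot (A \otimes g_A) = (m_A \otimes H) \cdot (A \otimes \alpha_A) : A \otimes_{A^{\underline{H}}} A \to A \otimes H$, which is exactly the Galois map $\emph{can}$ in the present statement; and the extension-of-scalars functor $F_{i_A}$ is visibly the same in both contexts.

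With these identifications in place, Theorem 5.5 applied to $(\underline{H}, \mathbb{A}, \lambda_{(A,\alpha_A)}, e_H)$ yields exactly the desired equivalence: $\bar F$ is an equivalence of categories iff $F_{i_A}$ is comonadic and $A$ is $\underline{H}$-Galois. I do not expect a significant obstacle, since the non-trivial computational content has already been supplied by Proposition 5.11 (matching $g_A$ with $\alpha_A$) and by the bijection of Theorem 3.3 (matching corings with entwinings); everything remaining is bookkeeping needed to apply Theorem 5.5 in this specialised setting.
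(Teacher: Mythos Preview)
Your proposal is correct and matches the paper's approach exactly: the paper derives this theorem as an immediate application of Theorem~5.5 to the entwining structure $(\underline{H},\mathbb{A},\lambda_{(A,\alpha_A)})$ with group-like element $e_H$, having already recorded (in the propositions just preceding the statement) that $e_H$ is group-like and that the induced coaction $g_A$ coincides with $\alpha_A$. The only slip is a numbering off-by-one---the identification $g_A=\alpha_A$ is Proposition~5.10 in the paper, not 5.11---but the content and strategy are identical.
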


Now applying Theorem 5.6 we get

\begin{theorem} Let $\mathbb{H}=(\bar{H}=(H, e_H,
m_H),\underline{H}=(H, \varepsilon_H, \delta_H))$ be a bialgebra
in $\V$, let $\mathbb{A}=((A,\alpha_A), e_A , m_A)$ be a right
$\mathbb{H}$-comodule algebra, and let $\lambda_{(A, \alpha_A)}:H
\otimes A \to A \otimes H$ be the corresponding entwining
structure. Suppose that $H$ is flat. Then the following are
equivalent:
\begin{itemize}
\item[\emph{(i)}] The functor $$\bar{F}: \V_{A^{\underline{H}}}
\to \V_{\mathbb{A}}^{\underline{H}}(\lambda_{(A, \alpha_A)})$$
$$(X, \nu_X)\longrightarrow
(X \otimes_{\mathbb{A}^{\underline{H}}} A, X
\otimes_{\mathbb{A}^{\underline{H}}} m_A, X
\otimes_{\mathbb{A}^{\underline{H}}} \alpha_A)$$ is an equivalence
of categories.

\item[\emph{(ii)}] $A$ is $\underline{H}$-Galois and
${_{\mathbb{A}^{\underline{H}}} A}$ is faithfully flat.
\end{itemize}
\end{theorem}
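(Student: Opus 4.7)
The plan is to deduce this theorem as a direct specialization of Theorem 5.6, applied to the entwining structure $(\underline{H}, \mathbb{A}, \lambda_{(A,\alpha_A)})$ produced in Theorem 5.9, together with the group-like element $e_H : I \to H$ of the coalgebra $\underline{H}$ (whose group-like nature was recorded immediately before Proposition 5.12). The key point is that every datum appearing in the statement of the current theorem is really the datum of Theorem 5.6 in disguise, so once the identifications are carried out there is no further calculation to do.

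First I would match the flatness hypotheses. The coalgebra occurring in our entwining structure is $\underline{H}$, whose underlying object is $H$, so the flatness assumption on $H$ in the present theorem is exactly the flatness assumption on the comonad's underlying object required by Theorem 5.6. Second, I would identify the coinvariants. By Proposition 5.1, the group-like element $e_H$ endows $A$ with the right $\underline{H}$-comodule structure $(e_H \otimes A)$ followed by $\lambda_{(A,\alpha_A)}$; Proposition 5.12 tells us that this structure is precisely the given coaction $\alpha_A$. Hence the algebra $A^{\underline{H}}$ produced by Proposition 5.2 as the equalizer of $\alpha_A$ and $A \otimes e_H$ is exactly the subalgebra $A^{\underline{H}}$ appearing in the statement, and the extension-of-scalars functor $F_{i_A} = - \otimes_{A^{\underline{H}}} A$ agrees with the functor $F$ of Theorem 5.6.

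Third, I would check that the Galois condition matches. The canonical morphism constructed before Theorem 5.5 is the composite $A \otimes_{A^{\underline{H}}} A \xrightarrow{A \otimes g_A} A \otimes A \otimes H \xrightarrow{m_A \otimes H} A \otimes H$, and substituting $g_A = \alpha_A$ (again by Proposition 5.12) together with the fact that $\alpha_A$ is an algebra morphism shows that this composite agrees with the canonical morphism $\mathrm{can} : A \otimes_{A^{\underline{H}}} A \to A \otimes H$ of the current statement. Therefore $A$ is $(\underline{H}, e_H)$-Galois in the sense of Section 5 if and only if it is $\underline{H}$-Galois in the sense stipulated here.

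With all three identifications in hand, the functor $\bar{F}$ and the conditions (i), (ii) of our theorem coincide term by term with those of Theorem 5.6 applied to $(\underline{H}, \mathbb{A}, \lambda_{(A,\alpha_A)})$ and $g = e_H$, and the desired equivalence is immediate. There is no genuine obstacle in the argument; the only care needed is the bookkeeping of the identifications above, all of which have been established earlier in the paper.
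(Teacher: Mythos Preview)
Your approach is correct and is exactly the paper's own argument: the paper simply states ``Now applying Theorem 5.6 we get'' before the theorem, and you have spelled out the identifications (flatness of $C=H$, coinvariants via Proposition 5.10, the Galois condition) that make this application go through. A couple of your internal references are off by one or two (the entwining is produced in Theorem 5.8, the coincidence $g_A=\alpha_A$ is Proposition 5.10, and the group-like remark about $e_H$ appears just before 5.10), but the mathematics is right.
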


\bigskip

Let $\mathbb{H}=(\bar{H}=(H, e_H, m_H),\underline{H}=(H,
\varepsilon_H, \delta_H))$  be a bialgebra in $\V$, and let
$\mathbb{A}=((A,\alpha_A), e_A , m_A)$ be a right
$\mathbb{H}$-comodule algebra. A right $(\mathbb{A},
\mathbb{H})$-module is a right $A$-module which is a right
$\underline{H}$-comodule such that the $\underline{H}$-comodule
structure morphism is a morphism of right $A$-modules. Morphisms
of right $(\mathbb{A}, \mathbb{H})$-modules are right $A$-module
right $\underline{H}$-comodule morphisms. We write $\V
^{\mathbb{H}}_\mathbb{A}$ for this category. Note that the
category $\V ^{\mathbb{H}}_\mathbb{A}$ is the category
$(\V^{\underline{H}})_{\mathbb{A}}$ of right $\mathbb{A}$-modules
in the monoidal category $\V^{\underline{H}}$, and it follows from
Theorem 2.1 that

\begin{proposition} $\V^{\mathbb{H}}_\mathbb{A}=\V_{\mathbb{A}}^
{\underline{H}}(\lambda_{(A, \alpha_A)})$.
\end{proposition}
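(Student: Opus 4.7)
The plan is to apply Theorem 2.1 to the mixed distributive law $\lambda_\alpha$ constructed just above the statement. The remark immediately preceding the proposition already identifies $\V^{\mathbb{H}}_{\mathbb{A}}$ with $(\V^{\underline{H}})_{\mathbb{A}}$, the category of right $\mathbb{A}$-modules in the monoidal category $\V^{\underline{H}}$. Since $\V^{\underline{H}} = \V_{\textbf{G}_{\underline{H}}}$, this is nothing but the Eilenberg--Moore category $(\V_{\textbf{G}_{\underline{H}}})^{\textbf{T}^A_H}$ of the lifted monad $\textbf{T}^A_H$.

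First I would make explicit why a right $(\mathbb{A}, \mathbb{H})$-module is precisely a $\textbf{T}^A_H$-algebra in $\V^{\underline{H}}$. A right $(\mathbb{A}, \mathbb{H})$-module is a triple $(M, \xi_M, \delta_M)$ consisting of a right $A$-action $\xi_M : M \otimes A \to M$ and a right $\underline{H}$-coaction $\delta_M : M \to M \otimes H$ such that $\delta_M$ is a morphism of right $A$-modules, where $M \otimes H$ carries the $A$-action induced by the $\mathbb{H}$-comodule algebra structure $\alpha_A : A \to A \otimes H$ of $A$. Unfolding the formula for the $\underline{H}$-coaction on $(M, \delta_M) \otimes (A, \alpha_A)$ (which involves the braiding $\sigma$), this condition is seen to be equivalent to the condition that $\xi_M$ is a morphism of $\underline{H}$-comodules $(M, \delta_M) \otimes (A, \alpha_A) \to (M, \delta_M)$ in $\V^{\underline{H}}$, which is exactly the $\textbf{T}^A_H$-algebra condition on $(M, \delta_M)$.

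Next, since $\textbf{T}^A_H$ was constructed precisely to extend $\textbf{T}^{\mathbb{A}}$ from $\V$ to $\V_{\textbf{G}_{\underline{H}}}$, Theorem 2.1 guarantees that $\textbf{T}^A_H$ corresponds under the bijective correspondence given there to the mixed distributive law $\lambda_\alpha$ itself. Consequently,
\[
(\V_{\textbf{G}_{\underline{H}}})^{\textbf{T}^A_H} \;=\; \V_{\mathbb{A}}^{\underline{H}}(\lambda_{(A, \alpha_A)}),
\]
the right-hand side being, by the identification recorded just after Theorem 2.1, another name for $(\V_{\textbf{G}_{\underline{H}}})^{\bar{\textbf{T}}} = (\V^{\textbf{T}^{\mathbb{A}}})_{\bar{\textbf{G}}}$. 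Combined with the identification from the previous paragraph, this yields the claimed equality $\V^{\mathbb{H}}_{\mathbb{A}} = \V_{\mathbb{A}}^{\underline{H}}(\lambda_{(A, \alpha_A)})$; morphisms match on both sides since in each description they are simply morphisms in $\V$ that intertwine both structures.

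The only point requiring care is the diagram chase in the second paragraph: one must verify that the two a priori different compatibility conditions, namely ``$\delta_M$ is $A$-linear with respect to the braided $A$-action on $M \otimes H$'' and ``$\xi_M$ is $\underline{H}$-colinear with respect to the braided $\underline{H}$-coaction on $M \otimes A$'', really do coincide. Using naturality of $\sigma$, bifunctoriality of $\otimes$, and the definitions of the two braided structures, both conditions reduce to the single equation
\[
\delta_M \cdot \xi_M \;=\; (\xi_M \otimes m_H)(M \otimes \sigma_{H,A} \otimes H)(\delta_M \otimes \alpha_A),
\]
so the identification is essentially automatic once the relevant structures are written down. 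No deeper idea is needed beyond appealing to Theorem 2.1 with the distributive law $\lambda_\alpha$ already in hand.
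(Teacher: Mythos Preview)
Your proposal is correct and follows essentially the same route as the paper: identify $\V^{\mathbb{H}}_{\mathbb{A}}$ with $(\V^{\underline{H}})_{\mathbb{A}}$, recognize the latter as the Eilenberg--Moore category of the lifted monad $\textbf{T}^A_H$, and invoke Theorem~2.1. The paper presents this as a one-line consequence of Theorem~2.1, whereas you additionally spell out the equivalence of the two compatibility conditions (``$\delta_M$ is $A$-linear'' versus ``$\xi_M$ is $\underline{H}$-colinear''), a point the paper asserts without proof; this extra care is well placed but does not constitute a different approach.
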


The following is an immediate consequence of Theorem 5.12.

\begin{theorem} Let $\mathbb{H}=(\bar{H}=(H, e_\mathbb{H},
m_\mathbb{H}),\underline{H}=(H, \varepsilon_\mathbb{H},
\delta_\mathbb{H}))$  be a bialgebra in $\V$, and let
$\mathbb{A}=((A,\alpha_A), e_A , m_A)$ be a right
$\mathbb{H}$-comodule algebra. Then the functor
$$\bar{F}: \V_{A^{\underline{H}}} \to
\V_{\mathbb{A}}^{\mathbb{H}}$$ is an equivalence of categories iff
the extension-of-scalars functor
$$F_{i_A}: \V_{A^{\underline{H}}} \to \V_{A}$$ is comonadic and
$A$ is $\underline{H}$-Galois.
\end{theorem}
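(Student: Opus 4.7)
The plan is to reduce this statement directly to Theorem 5.12 by means of the categorical identification provided by Proposition 5.13. That proposition asserts the equality $\V^{\mathbb{H}}_\mathbb{A} = \V_{\mathbb{A}}^{\underline{H}}(\lambda_{(A,\alpha_A)})$, so the functor $\bar{F}: \V_{A^{\underline{H}}} \to \V^{\mathbb{H}}_\mathbb{A}$ appearing in the present statement is, up to this identification, literally the functor $\bar{F}$ to which Theorem 5.12 applies. Under this relabelling the source category, the domain functor $F_{i_A}$, and the hypothesis of comonadicity are untouched.

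What remains is to check that the two notions of Galois property match. The group-like element governing Theorem 5.12 is $e_H : I \to H$, observed to be group-like in $\underline{H}$ immediately before Proposition 5.9. For this choice, the canonical $A$-coring morphism $\mathrm{can}: A \otimes_{A^{\underline{H}}} A \to A \otimes H$ defined before Theorem 5.5 is precisely the morphism appearing in the present statement; and Proposition 5.9 ensures that the right $\underline{H}$-comodule structure on $A$ produced from the group-like $e_H$ via Proposition 5.1 coincides with the original coaction $\alpha_A$. Hence the equalizer subalgebra $A^{\underline{H}}$ in the sense used here agrees with the one built from $e_H$, and so "$A$ is $\underline{H}$-Galois" in the statement is exactly "$A$ is $(\underline{H}, e_H)$-Galois" in the sense of Theorem 5.12.

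With both sides of the equivalence thus identified, the assertion is just a restatement of Theorem 5.12 along the equality of categories in Proposition 5.13. There is no substantive obstacle: the proof is pure bookkeeping, and the only pitfall to avoid is conflating the two presentations of the category of right $(\mathbb{A},\mathbb{H})$-modules --- once Proposition 5.13 is invoked, the translation is automatic.
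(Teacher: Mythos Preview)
Your argument is correct and matches the paper's own justification, which simply states that the theorem is an immediate consequence of Theorem 5.12 (with Proposition 5.14 supplying the identification $\V^{\mathbb{H}}_\mathbb{A}=\V_{\mathbb{A}}^{\underline{H}}(\lambda_{(A,\alpha_A)})$). Note a couple of off-by-one references: the identification you cite as ``Proposition 5.13'' is Proposition 5.14, and the result ensuring that the $e_H$-induced coaction equals $\alpha_A$ is Proposition 5.10, not 5.9.
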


\bigskip

Let $\mathbb{H}=(\bar{H}=(H, e_H, m_H),\underline{H}=(H,
\varepsilon_H, \delta_H), S)$  be an Hopf algebra in $\V$. Then
clearly $\bar{H}=(H, e_H, m_H)$ is a right $\mathbb{H}$-comodule
algebra.

\begin{proposition} The composite $$\xymatrix{x: H \otimes H
\ar[r]^-{H \otimes \delta_H} & H \otimes H \otimes H \ar[r]^-{m_H
\otimes H}& H \otimes H}$$ is an isomorphism.
\end{proposition}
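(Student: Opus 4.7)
The plan is to exhibit an explicit two-sided inverse built from the antipode. Specifically, I would set
$$y = (m_H \otimes H)(H \otimes S \otimes H)(H \otimes \delta_H) : H \otimes H \to H \otimes H.$$
Informally, where $x$ sends $a \otimes b$ to $a\,b_{(1)} \otimes b_{(2)}$, the map $y$ sends $a \otimes b$ to $a\,S(b_{(1)}) \otimes b_{(2)}$, so the two should cancel by the antipode axiom. Note that no braiding is needed in the definition of either $x$ or $y$, since the multiplication acts on the first two tensor slots and the comultiplication on the last; this will keep the verification free of $\sigma$.

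For $x \circ y$, I would form the five-fold composite
$$(m_H \otimes H)(H \otimes \delta_H)(m_H \otimes H)(H \otimes S \otimes H)(H \otimes \delta_H)$$
and push the inner $(H \otimes \delta_H)$ past $(m_H \otimes H)$ using bifunctoriality to rewrite it as $(m_H \otimes H \otimes H)(H \otimes H \otimes \delta_H)$; then push the inner $(H \otimes H \otimes \delta_H)$ past $(H \otimes S \otimes H)$ so that it meets the outer $(H \otimes \delta_H)$. Coassociativity lets me combine the two iterated coproducts as $(H \otimes (\delta_H \otimes H) \delta_H)$, and associativity of $m_H$ regroups the remaining two multiplications. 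At this point the interior collapses to a factor of the form $H \otimes [m_H(S \otimes H)\delta_H] \otimes H$, and the antipode axiom turns this into $H \otimes e_H \varepsilon_H \otimes H$. A final application of the unit axiom $m_H(H \otimes e_H) = \mathrm{id}_H$ on the left and the counit axiom $(\varepsilon_H \otimes H)\delta_H = \mathrm{id}_H$ on the right yields $\mathrm{id}_{H \otimes H}$.

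The verification of $y \circ x = \mathrm{id}_{H \otimes H}$ is formally analogous, using the companion antipode identity $m_H(H \otimes S)\delta_H = e_H \varepsilon_H$ in place of $m_H(S \otimes H)\delta_H = e_H \varepsilon_H$, together with the same coassociativity, associativity, and (co)unit manipulations.

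The only real obstacle is bookkeeping: keeping careful track of which map lands in which tensor factor when applying bifunctoriality, and making sure that each reassociation is justified by a single one of the structure axioms (associativity, coassociativity, unit, counit, or antipode). Since every rewrite acts on disjoint tensor slots, strictness of $\V$ and plain bifunctoriality suffice, and no braid ever has to be inserted or resolved.
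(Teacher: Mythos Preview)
Your proposal is correct and matches the paper's proof almost exactly: the paper defines the same inverse $y=(m_H\otimes H)(H\otimes S\otimes H)(H\otimes\delta_H)$ and verifies $yx=1$ via the same coassociativity/bifunctoriality/associativity rearrangement leading to the factor $H\otimes[m_H(H\otimes S)\delta_H]\otimes H$, then declares $xy=1$ analogous. The only cosmetic difference is that you spell out $xy$ first and defer $yx$, whereas the paper does the reverse.
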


\begin{proof} We will show that the composite $$\xymatrix{ y: H \otimes H
\ar[r]^-{H \otimes \delta_H} & H \otimes H \otimes H \ar[r]^-{H
\otimes S \otimes H}& H \otimes H \otimes H \ar[r]^-{m_H \otimes
H}& H \otimes H }$$ is the inverse for $x$. Indeed, consider the
diagram

$$ \xymatrix{
H \otimes H \ar@{}[rrdd]^{(1)} \ar[dd]_-{H \otimes \delta_H}
\ar[rr]^-{H \otimes \delta_H} && H \otimes H \otimes H
\ar@{}[rrdd]^{(2)} \ar[rr]^-{m_H \otimes H} \ar[dd]_-{H \otimes H
\otimes \delta_H} && H \otimes H \ar[dd]^{H \otimes
\delta_H}\\\\
H \otimes H \otimes H  \ar[rr]^-{H \otimes \delta_H \otimes H} &&
H \otimes H \otimes H \otimes H \ar@{}[rrdd]^{(3)}\ar[dd]_{H
\otimes H \otimes S \otimes H} \ar[rr]^{m_H \otimes H \otimes H}&&
H \otimes H \otimes H
\ar[dd]^{H \otimes S \otimes H}\\\\
&&H \otimes H \otimes H \otimes H \ar@{}[rrdd]^{(4)} \ar[dd]_{H
\otimes m_H \otimes H} \ar[rr]^-{m_H \otimes H \otimes H}&& H
\otimes H \otimes H \ar[dd]^{m_H \otimes
H}\\\\
&& H \otimes H \otimes H \ar[rr]_{m_H \otimes H}&& H \otimes H\,
.}$$ We have:
\begin{itemize}
\item Square (1) commutes because of coassociativity of
$\delta_H$; \item Square (2) commutes because of naturality of
$m_H \otimes -$; \item Square (3) commutes because $- \otimes -$
is a bifunctor; \item Square (4) commutes because of associativity
of $m_H$.
\end{itemize}
 Then $$yx=(m_H \otimes H )(H \otimes S \otimes H)(H \otimes
 \delta_H)(m_H \otimes H)(H \otimes \delta_H)=$$

 $$=(m_H \otimes H )(H \otimes m_H \otimes H)
 (H \otimes H \otimes S \otimes H)(H \otimes \delta_H \otimes H)(H \otimes
 \delta_H),$$ but since $$m_H (H \otimes S )\delta_H =e_H \cdot \varepsilon_H,$$
$$yx=(m_H \otimes H)(H \otimes e_H \varepsilon_H \otimes H )(H \otimes
\delta_H)=$$$$=(m_H \otimes H)(H \otimes e_H \otimes H )(H \otimes
 \varepsilon_H \otimes H )(H \otimes
\delta_H)=$$$$=1_{H \otimes H} \otimes 1_{H \otimes H}=1_{H
\otimes H}.$$ Thus $yx=1$. The equality $xy=1$ can be shown in a
similar way.

\end{proof}

\begin{proposition}$(H, \delta_H)^{\underline{H}}\simeq (I, e_H).$
\end{proposition}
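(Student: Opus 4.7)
The proposition claims that the equalizer defining $H^{\underline{H}} = (H,\delta_H)^{\underline{H}}$ — namely the equalizer of $\delta_H, H \otimes e_H : H \to H \otimes H$ — is isomorphic as an algebra to the trivial algebra on $I$, with $i_H = e_H$. My plan is to verify directly that $e_H : I \to H$ is universal among morphisms equalizing the pair; Proposition 5.15 and the antipode $S$ turn out not to be needed for this statement, only the bialgebra axioms.

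First I would check that $e_H$ equalizes the pair: since $\mathbb{H}$ is a bialgebra, $e_H$ is a coalgebra morphism, so $\delta_H \cdot e_H = e_H \otimes e_H$, and of course $(H \otimes e_H) \cdot e_H = e_H \otimes e_H$. Next, for universality, I would take an arbitrary $f : Z \to H$ with $\delta_H \cdot f = (H \otimes e_H) \cdot f$ and post-compose both sides with $\varepsilon_H \otimes H : H \otimes H \to I \otimes H = H$. On the left, the counit axiom $(\varepsilon_H \otimes H) \cdot \delta_H = 1_H$ gives $f$; on the right, a direct computation (using only that $- \otimes -$ is a bifunctor) gives $(\varepsilon_H \otimes H) \cdot (H \otimes e_H) = e_H \cdot \varepsilon_H$, so the right side becomes $e_H \cdot \varepsilon_H \cdot f$. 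Hence $f = e_H \cdot (\varepsilon_H \cdot f)$, exhibiting a factorization through $e_H$ via $\varepsilon_H \cdot f : Z \to I$. Uniqueness of this factorization is immediate because $\varepsilon_H \cdot e_H = 1_I$ (a coalgebra axiom for the bialgebra unit), so $e_H$ is a split monomorphism.

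This shows $(I,e_H)$ is the equalizer as an object of $\V$. To finish, I would identify the algebra structures: by Proposition 5.2, the unit and multiplication of $H^{\underline{H}}$ are the unique morphisms making the appropriate diagrams with $i_H = e_H$ commute. For the unit, $e_H \cdot e_{H^{\underline{H}}} = e_H$ forces $e_{H^{\underline{H}}} = 1_I$ by the monomorphism property above; for the multiplication, $e_H \cdot m_{H^{\underline{H}}} = m_H \cdot (e_H \otimes e_H) = e_H$ (unit axiom in $\mathbb{H}$) forces $m_{H^{\underline{H}}} = 1_I : I \otimes I = I \to I$. Thus $H^{\underline{H}}$ is the trivial algebra on $I$, and $i_H = e_H$, as claimed.

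There is no real obstacle here; the only place one has to be a little careful is tracking the unit-object identifications when computing $(\varepsilon_H \otimes H)(H \otimes e_H) = e_H \cdot \varepsilon_H$, and checking that the algebra structure transported along the equalizer iso matches the trivial one. As a sanity check, one could alternatively post-compose by the isomorphism $y = x^{-1}$ of Proposition 5.15 to reduce the problem to the equalizer of $e_H \otimes H$ and $H \otimes e_H$; this is how the antipode would naturally enter, but the shorter bialgebra-only argument above is preferable.
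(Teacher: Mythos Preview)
Your argument is correct, and in fact more economical than the paper's. You show directly that
\[
I \xrightarrow{\;e_H\;} H \rightrightarrows H \otimes H
\]
(with the pair $\delta_H,\, H\otimes e_H$) is a split equalizer, using $\varepsilon_H$ as the retraction of $e_H$ and $\varepsilon_H\otimes H$ as the splitting on the codomain; this needs only the bialgebra axioms (counit law and that $e_H$ is a coalgebra map). The paper instead first invokes the isomorphism $x=(m_H\otimes H)(H\otimes\delta_H)$ of Proposition~5.16 --- whose invertibility genuinely uses the antipode $S$ --- to transport the pair $(\delta_H,\,H\otimes e_H)$ to the pair $(e_H\otimes H,\,H\otimes e_H)$, and only then observes that $e_H$ is a (split) equalizer of the latter. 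So the paper's route ties the result to the Hopf structure via Proposition~5.16, whereas your route shows the same conclusion already holds at the bialgebra level; the trade-off is that the paper's argument makes the role of $x$ visible, which is what is reused in the proof of Theorem~5.18. Your closing remark anticipates exactly this alternative (note: the relevant isomorphism is Proposition~5.16, not~5.15). Your identification of the induced algebra structure on $I$ via Proposition~5.2 is also correct and a detail the paper leaves implicit.
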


\begin{proof} We will first show that the diagram
$$\xymatrix{ H \ar@{=}[d]\ar@{->}@<0.5ex>[rr]^-{H \otimes e_H} \ar@ {->}@<-0.5ex>
[rr]_-{e_H  \otimes H}&& H \otimes H \ar[d]^{x}\\
H \ar@{->}@<0.5ex>[rr]^-{\delta_H} \ar@ {->}@<-0.5ex> [rr]_-{e_H
\otimes H}&& H \otimes H}
$$ is serially commutative. Indeed, we have: $$x  (H \otimes e_H)=
(m_H \otimes H)(H \otimes \delta_H) (H \otimes e_H)=
\,\,\text{since} \,\, \delta_H \,\, \text{is an algebra
morphism}$$$$=(m_H \otimes H)(H \otimes e_H \otimes e_H)=
\,\,\text{since} \,\, e_H \,\, \text{is the unit for } m_H$$$$=H
\otimes e_H;$$

$$x  (e_H  \otimes H)=(m_H \otimes H)(H \otimes \delta_H)
(e_H \otimes  H)=\,\,\text{since} \,\, e_H \,\, \text{is a
coalgebra morphism}$$$$=(m_H \otimes H)(e_H  \otimes
H)\delta_H=1_H  \delta_H=\delta_H.$$ Thus, ${(H,\delta_H,
e_H)}^{\underline{H}}$ is isomorphic to the equalizer of the pair
$(H \otimes e_H, e_H \otimes  H)$. But since $e_H :I \to H$ is a
split monomorphism in $\V$, the diagram
$$
\xymatrix{ I \ar[r]^-{e_H}& H \ar@{->}@<0.5ex>[rr]^-{H \otimes
e_H} \ar@ {->}@<-0.5ex> [rr]_-{e_H \otimes  H}&& H \otimes H}
$$ is an equalizer diagram. Hence $(H, \delta_H, e_H)^{\underline{H}}\simeq (I, e_H).$

\end{proof}

\begin{theorem} Let $\mathbb{H}=(\bar{H}=(H, e_H,
m_H),\underline{H}=(H, \varepsilon_H, \delta_H), S)$ be a Hopf
algebra in $\V$. Then the functor $$\V \to \V
^{\mathbb{H}}_{\mathbb{H}}$$$$V \to V \otimes H$$ is an
equivalence of categories.
\end{theorem}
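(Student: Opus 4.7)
The plan is to apply the simpler Hopf-module equivalence criterion (Theorem 5.15) to the tautological right $\mathbb{H}$-comodule algebra structure on $\bar H$ itself, with coaction $\alpha_A = \delta_H$. By Proposition 5.17 we have $H^{\underline{H}} \simeq I$, so $\V_{H^{\underline{H}}} \simeq \V$ and $-\otimes_{H^{\underline{H}}} H$ reduces to $-\otimes H$; under this identification, the functor $\bar F$ of Theorem 5.15 becomes exactly the functor $V \mapsto V \otimes H$ of the present statement, and it will suffice to verify the two hypotheses of that theorem.

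For the Galois condition, Proposition 5.10 tells us that the $\underline{H}$-coaction $g_H$ on $H$ attached to the group-like $e_H$ coincides with $\delta_H$ itself. Consequently, the canonical morphism $\mathrm{can} : H \otimes_I H = H \otimes H \to H \otimes H$ is precisely the composite $x = (m_H \otimes H)(H \otimes \delta_H)$ of Proposition 5.16, for which that proposition already furnishes an explicit two-sided inverse $y$ built from the antipode $S$. Hence $\bar H$ is $\underline{H}$-Galois.

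It remains to verify that $F_{i_A} = -\otimes H : \V \to \V_H$ is comonadic. The key observation is the bialgebra identity $\varepsilon_H \cdot e_H = 1_I$, which implies that the unit $V \otimes e_H$ of the adjunction $F_{i_A} \dashv U_{i_A}$ admits the natural retraction $V \otimes \varepsilon_H$. From this one reads off at once that $F_{i_A}$ is conservative: any $f : V \to W$ with $f \otimes H$ invertible in $\V_H$ admits the inverse $(V \otimes \varepsilon_H)(f \otimes H)^{-1}(W \otimes e_H)$. Combined with the absoluteness of split equalizers and descent along the retraction $V \otimes \varepsilon_H$ (which supplies, for every $F_{i_A}$-split pair in $\V$, an equalizer in $\V$ preserved by $F_{i_A}$), the dual of Beck's theorem yields comonadicity. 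Theorem 5.15 then delivers the asserted equivalence $\V \simeq \V^{\mathbb{H}}_{\mathbb{H}}$.

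The principal technical obstacle is this final comonadicity step: it rests on the bialgebra axiom $\varepsilon_H \cdot e_H = 1_I$ alone, with the antipode entering only through the Galois condition. Under the extra hypothesis that $H$ be flat, Theorem 5.13 would let one replace comonadicity by the simpler demand that ${}_{I} H$ be faithfully flat, but in the general monoidal setting one must argue via the split-mono unit plus Beck as above.
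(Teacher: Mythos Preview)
Your proposal is correct and follows essentially the same route as the paper: reduce to Theorem~5.15 (the paper invokes the equivalent Theorem~5.12 together with Proposition~5.14), use Proposition~5.17 to identify $H^{\underline H}\simeq I$, and use Proposition~5.16 for the Galois condition. The only difference is in the comonadicity step: where you sketch the argument directly from the naturally split monic unit $V\otimes e_H$ (retracted by $V\otimes\varepsilon_H$) via dual Beck, the paper simply observes that $e_H$ is split mono and invokes the ready-made result~3.16 of~\cite{Me}; your explicit conservativity computation and the retraction-based treatment of $F$-split pairs amount to an outline of exactly that cited result.
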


\begin{proof}It follows from Propositions 5.16 and 5.17 that $H$
is $\underline{H}$-Galois, and according to Theorem 5.12, the
functor $\V \to \V^{\mathbb{H}}_{\mathbb{H}} $ is an equivalence
iff the functor $- \otimes H : \V \to \V _{\bar{H}}$ is comonadic.
But since the morphism $e_H : I \to H$ is a split monomorphism in
$\V$, the unit of the adjunction $F_{e_H} \dashv U_{e_H}$ is a
split monomorphism, and it follows from
3.16 of \cite{Me} that $F_{e_H}$ is comonadic. This completes the
proof.
\end{proof}

\end{document}